\definecolor{refs}{rgb}{0.7,0,0}
\definecolor{ext}{RGB}{112,112,112}
\definecolor{cite}{RGB}{034,113,179}
\newtheorem{theorem}{Theorem}[section]
\newtheorem{lemma}[theorem]{Lemma}
\newtheorem{proposition}[theorem]{Proposition}
\theoremstyle{definition}
\newtheorem*{definition}{Definition}
\newtheorem*{remark}{Remark}
\newtheorem*{example}{Example}
\newtheorem*{examples}{Examples}
\newcommand{\R}{\mathbb{R}}
\newcommand{\N}{\mathbb{N}}
\newcommand{\C}{\mathbb{C}}
\renewcommand{\i}{\mathrm{i}}
\newcommand{\V}[1][3]{{\mathcal{H}_{#1}}}
\newcommand{\spn}{\operatorname{span}}
\newcommand{\tr}{\operatorname{tr}}
\newcommand{\fJ}{\mathfrak{J}}
\newcommand{\fj}{\mathfrak{j}}
\newcommand{\fg}{\mathfrak{g}}
\newcommand{\gl}{\mathfrak{gl}}
\author[W.~Kry\'nski]{Wojciech Kry\'nski}
\title[GL(2)-geometry and complex structures]{GL(2)-geometry and complex structures}
\address{
Institute of Mathematics, Polish Academy of Sciences, ul. \'Sniadeckich 8, 00-656 Warszawa, Poland}
\email{krynski@impan.pl}
\begin{document}
\maketitle

\begin{abstract}
We study $GL(2)$-structures on differential manifolds. The structures play a fundamental role in the geometric theory of ordinary differential equations. We prove that any $GL(2)$-structure on an even dimensional manifold give rise to a certain almost-complex structure on a bundle over the original manifold. Further, we exploit a natural notion of integrability for the $GL(2)$-structures, which is a counterpart of the self-duality for the 4-dimensional conformal structures. We relate the integrability of the $GL(2)$-structures to the integrability of the almost-complex structures. This allows to perform a twistor-like construction for the $GL(2)$-geometry.  Moreover, we provide an explicit construction of a canonical connection for any $GL(2)$-structure.
\end{abstract}

\section{Introduction}
A $GL(2)$-structure on a manifold $M$ is a smooth field of rational normal curves in the tangent bundle $TM$. The curves reduce the full frame bundle of $M$ to a $GL(2)$-subbundle, where $GL(2)$ acts irreducibly on each fiber. In this way the $GL(2)$-structures can be studied in the classical framework of $G$-structures. Among the $GL(2)$-structures one can distinguish a natural subclass of \emph{integrable} structures, see \cite{FK,K1,KM} where they were referred to as involutive or $\alpha$-integrable structures. Roughly, we shall say that a structure is $i$-integrable if $M$ admits a family of $i$-dimensional submanifolds that are tangent to the iterated tangential varieties of the aforementioned rational curves in $TM$. The existence of the submanifolds allows to consider the integrable $GL(2)$-structures as natural counterparts of the Einstein--Weyl geometry in dimension 3 or anti-self-dual metrics in dimension 4 (as follows from the classical results of E.~Cartan and R.~Penrose \cite{C,P}). 

In the recent years, the $GL(2)$-structures, also reffered to as the paraconformal structures, attracted much attention due to their significance in the geometric theory of ordinary differential equations (ODEs), see \cite{B,DT,GN,K1}. In particular the 4-dimensional structures introduced by R.~Bryant in the seminal paper \cite{B} appeared in the context of the exotic holonomy groups. All $GL(2)$-structures arising from ODEs are integrable. In dimension 4 the notion of integrability coincides with the torsion-freeness considered in \cite{B}. On the other hand, in higher dimensions the torsion necessairly appears (we refer to \cite{GN,AS} for a detailed study of 5-dimensional structures).

In \cite{FK,KM} it was shown that the $GL(2)$-structures can arise also as characteristic varieties of certain systems of partial differential equations (PDEs). The integrability of the $GL(2)$-structures reflects the integrability of PDEs. This phenomenon has been observed before in \cite{FK0} in the context of 3-dimensional conformal metrics of Lorentzian signature (see also \cite{CK}). The 3-dimentional metrics are exactly the 3-dimensional $GL(2)$-structures (see \cite{C,DK,FKN,N0} for the geometry of these structures in relation to ODEs).

The purpose of the present paper is to investigate the $GL(2)$-geometry through the naturally associated complex geometry. We shall mostly consider the $GL(2)$-structures on manifolds of even dimension $n=2l$ that are $l$-integrable. The structures will be also referred to as the \emph{half-integrable} $GL(2)$-structures. In the next section, Section \ref{secPre}, we shall provide a formal definition of the integrability. This will be followed by some examples and basic properties of the structures. Then, in Section \ref{secLinear}, we deal with algebraic properties of the standard irreducible representation of $GL(2)$ on the space $\V[k]$ of $k$-th order homogeneous polynomials in 2 variables. In particular we introduce a canonical complex structure on $\V[k]$. In Sections \ref{secGL2} and \ref{secCon} we utilize the algebraic  construction of Section \ref{secLinear} in order to introduce a canonical almost-complex structure for any even dimensional $GL(2)$-structure (in odd dimensions one can define a CR-structure). 

Our construction of the almost-complex structure is divided into two steps: Firstly, we define an almost-complex structure for a $GL(2)$-geometry equipped with a $GL(2)$-connection. This is a content of Section \ref{secGL2}. Secondly, in Section~\ref{secCon}, we provide an explicit construction of a canonical connection for any $GL(2)$-structure, which is a result of an independent interest.  The canonical connection gives a canonical almost-complex structure for even-dimensional $GL(2)$-structures. In dimension 4 our canonical connection coincide with the Bryant connection introduced in \cite{B}.

Finally, in Section \ref{secHol} we study the integrability of the almost-complex structures in terms of the integrability of the original $GL(2)$-structure. We also provide some further results on the holomorphic sections of the complex manifold coming from a half-integrable $GL(2)$-structure in a spirit of the twistor theory, see \cite{LM,M2}.

\paragraph*{\bf Acknowledges.} I am indebted to Thomas Mettler for many inspiring conversations that had a significant impact on the paper.

\section{Preliminaries on $GL(2)$-structures}\label{secPre}
We start with a formal definition of a $GL(2)$-structure, as introduced in \cite{DT}. For this, let us fix a manifold $M$ of dimension $k+1$. A $GL(2)$-structure on $M$ is an isomorphism of vector bundles
\begin{equation}\label{isoGL2}
TM\simeq S^k(E)
\end{equation}
where $E\to M$ is a rank-2 vector bundle over $M$ and $S^k(E)$ denotes $k$-th symmetric tensor product of $E$.

Let us fix a basis $(x,y)$ of $E_p$, where $p\in M$. This establishes an isomorphism of $S^k(E_p)$ and the space $\V[k]$ of homogeneous polynomials of order $k$ in two variables $x$ and $y$. Moreover, the natural right action of $GL(2)$ on $E_p$ with the chosen basis give rise to the following right action of $GL(2)$ on $S^k(E_p)$ 
\begin{equation}\label{eqaction}
(V\cdot g)(x,y)=V((x,y)g),
\end{equation}
where $V=V(x,y)\in\V[k]$ is a polynomial and $g\in GL(2)$. This yields a reduction of the full frame bundle $F\to M$ to a $GL(2)$-bundle $B(E)\to M$ whose fibers consist of frames of the form
$$
\chi=(x^k,x^{k-1}y,\ldots,xy^{k-1},y^k).
$$
$GL(2)$ acts on $B(E)$ via the standard irreducible representation $GL(2)\to GL(\V[k])$ defined by \eqref{eqaction}. The corresponding  irreducible representation of the Lie algebra $\gl(2)$  on $\V[k]$ will be denoted $\fg_k$, i.e. $\fg_k$ is the subset of $\V[k]^*\otimes\V[k]$ consisting of
$$
\left(\begin{array}{ccccc}
k\phi^1_1 & \phi^2_1 & 0 & \cdots &0\\
k\phi^1_2 & (k-1)\phi^1_1+\phi^2_2 & 2\phi^2_1 & \cdots& 0 \\ 
0 & (k-1)\phi^1_2 & (k-2)\phi^1_1+2\phi^2_2 & \cdots & 0 \\ 
\vdots & \vdots & \vdots & \ddots & \vdots \\
0 & 0 & 0 &\cdots & k\phi^2_2
\end{array}\right),
$$
where $\left(\begin{array}{cc}
\phi^1_1 & \phi^2_1\\
\phi^1_2 & \phi^2_2 
\end{array}\right)\in\gl(2)$.

Conversely, any reduction of the frame bundle $F\to M$ to a $GL(2)$-bundle, where $GL(2)$ acts irreducibly on each fiber of $F$, comes from an identification $TM\simeq S^k(E)$, since $\V[k]$ is the unique $(k+1)$-dimensional irreducible representation of $GL(2)$.

A $GL(2)$-structure on $M$ defines the following cone in each tangent space $T_pM$
$$
C_p(E)=\left\{\underbrace{e\odot\cdots\odot e}_k\ |\ e\in E_p\right\},
$$
where $\odot$ stands for a symmetric tensor product in $E_p\otimes E_p$, so $\underbrace{e\odot\cdots\odot e}_k$ is an element of $S^k(E_p)$.
We shall say that $C_p(E)$ is a cone of null directions of  a $GL(2)$-structure. If a basis $(x,y)$ of $E_p$ is chosen then, under the isomorphism $T_pM\simeq\V[k]$, the null directions correspond to polynomials of the form $V(x,y)=(ax+by)^k$. Passing to the projective space $P(T_pM)$ the cone $C_p(E)$ becomes a rational normal curve of order $k$ (also called the Veronese curve or twisted cubic in dimension 4).

\begin{example}
In dimension 3 the identification $T_pM\simeq \V[2]$ assigns to a vector $V\in T_pM$ a polynomial $V(x,y)=ax^2+bxy+cy^2$, and the coefficients $(a,b,c)$ became linear coordinates on $T_pM$. The null cone consists of $V(x,y)$ with multiple root, i.e. it is given by the discriminant equation $\Delta=b^2-4ac=0$. Note that $\Delta$ in this framework is a quadratic form on $T_pM$ and the corresponding bi-linear form defines a scalar product on $T_pM$ of Lorentzian signature. The cone of null directions of the metric coincides with $C_p(E)$.
\end{example}

Clearly $C_p(E)$ is $GL(2)$-invariant. Moreover, it is easy to observe that any field of cones $p\mapsto C_p$, where each $C_p$ is a rational normal curve in $T_pM$, defines the corresponding $GL(2)$-structure uniquely.

The osculating cone (or the tangential variety) of $C_p(E)$ is the collection of all tangent vectors to $C_p(E)$ considered as a subset of $T_pM$. Proceeding by induction we define $(i+1)$-th osculating cone of $C_p(E)$ as the osculating cone of $i$-th osculating cone of $C_p(E)$, where $0$-th osculating cone is $C_p(E)$ itself. Note that the $i$-th osculating cone of $C_p(E)$ is a $(2+i)$-dimensional subvariety of $T_pM\simeq\V[k]$ consisting of homogeneous polynomials with multiple roots of order $k-i$.

\begin{definition}
Let $TM\simeq S^k(E)$ be a $GL(2)$-structure on $M$. Pick $p\in M$ and $V\in C_p(E)$. An $\alpha_i$-\emph{plane} at $V$ is the tangent space at $V$ to the $(i-2)$-th osculating cone of $C_p(E)$.
\end{definition}

It follows from the definition that any $\alpha_i$-plane is an $i$-dimensional subspace of $T_pM$ for some $p\in M$. Moreover, if a basis $(x,y)$ of $E$ is chosen then any $\alpha_i$-plane is spanned by
$$
(ax+by)^k,(ax+by)^{k-1}(cx+dy),\ldots,(ax+by)^{k-i+1}(cx+dy)^{i-1},
$$
for some constants $a,b,c,d\in\R$ such that $ad-bc\neq 0$. Now we are in a position to introduce the notion of integrability for $GL(2)$-structures.

\begin{definition}
Let $TM\simeq S^k(E)$ be a $GL(2)$-structure on $M$. A submanifold $N\subset M$ of dimension $i$ is called $\alpha_i$-\emph{submanifold} of $M$ if all tangent spaces $T_pN$,  $p\in N$, are $\alpha_i$-planes.
\end{definition}
\begin{definition}
A $GL(2)$-structure on manifold $M$ is $i$-\emph{integrable} if there is a family of $\alpha_i$-submanifolds in $M$ such that any $\alpha_i$-plane at any point $p\in M$ is tangent to an $\alpha_i$-submanifold in the family.
If $M$ is a manifold of even dimension $n=2l$ then an $l$-integrable $GL(2)$-structure on $M$ is called a \emph{half-integrable} $GL(2)$-structure.
\end{definition}
\begin{example}
A particularly interesting family of integrable $GL(2)$-structures comes from so-called Veronese webs. A Veronese web is a special 1-parameter family of corank-1 foliations $\mathcal{F}=\{\mathcal{F}_t\}_{t\in\R}$, introduced by Glefand and Zakharevich in connection to bi-Hamiltonian systems \cite{GZ}. Namely, a Veronese web is a family of foliations such that in a neighborhood of any point there exist point-wise independent 1-forms $\omega_0,\ldots,\omega_k$ such that
$$
T\mathcal{F}_t=\ker\omega(t),
$$
where  $\omega(t)=\omega_0+t\omega_1+\cdots+t^k\omega_k$. Then, $t\mapsto\R\omega(t)$ is a field of rational normal curves in $P(T^*M)$. The dual curves in $P(TM)$ are defined as
$$
t\mapsto\ker\{\omega(t), \omega'(t),\ldots,\omega^{(k-1)}(t)\}.
$$
Due to the fact that all $\omega_i$ are independent the dual curves are well defined rational normal curves in $P(TM)$. Thus they define a $GL(2)$-structure on $M$. The integrability condition (for the family of foliations)
$$
d\omega(t)\wedge\omega(t)=0
$$
is satisfied for any $t$ and it translates to the fact that that  the corresponding $GL(2)$-structure is $k$-integrable. In fact it is $i$-integrable for all $i=1,\ldots,k$ acroding to Statement \eqref{st1} of Proposition \ref{propInt} below. It is proved in \cite{K1} that Veronese webs are described by the following hierarchy of integrable systems (in dimension 3 this is the Hirota equation related to the Einstein--Weyl structures \cite{DK}; see also \cite{KP} for other PDEs that descent to 3-dimensional Veronese webs)
$$
(a_i-a_j)\partial_0w\partial_i\partial_jw+a_j\partial_iw\partial_j\partial_0w-a_i\partial_jw\partial_i\partial_0w=0, \qquad i,j=1,\ldots,k,
$$
where $a_i$'s are distinct constants. Note that the characteristic variety of this system coincides with the field of the rational normal curves in the cotangent bundle that define the structure (see \cite{K1} for details and \cite{FK} for generalizations to arbitrary $k$-integrable $GL(2)$-structures).
\end{example}
\vskip 2ex
We have the following
\begin{proposition}\label{propInt}
Let $M$ be a manifold of dimension $k$.
\begin{enumerate}
\item\label{st1} If a $GL(2)$-structure on $M$ is $i$-integrable, then it is $j$-integrable for all $j<i$.
\item\label{st2} A $GL(2)$-structure on $M$ is $k-1$-integrable if and only if it is $k$-integrable.
\end{enumerate}
\end{proposition}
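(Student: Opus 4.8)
My plan is to convert integrability into a first-order condition on the bundle of null directions and then to exploit that these conditions form a chain that is stable under differentiation in the curve parameter. I take $k$ to be the order of the structure, $TM\simeq S^k(E)$, so that the $\alpha_i$-planes have codimension $k+1-i$ and $\alpha_k$-planes are hyperplanes; I write $\omega'(t)=\partial_t\omega(t)$, $\omega^{(m)}(t)=\partial_t^m\omega(t)$, and reserve $d$ for the exterior derivative on $M$ with $t$ a parameter. First I fix a local coframe adapted to the structure, so that the induced field of rational normal curves in $P(T^*M)$ is $t\mapsto\omega(t)=\omega_0+t\omega_1+\cdots+t^k\omega_k$ with the $\omega_m$ pointwise independent. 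Since an $\alpha_i$-plane consists, under $T_pM\simeq\V[k]$, of the polynomials divisible by the $(k+1-i)$-th power of a fixed linear form, dualizing yields
$$\alpha_i(p,t)=\ker\{\omega(t),\omega'(t),\ldots,\omega^{(k-i)}(t)\},$$
so that $\alpha_k(p,t)=\ker\omega(t)$, $\alpha_{k-1}(p,t)=\ker\{\omega(t),\omega'(t)\}$, and in general $\alpha_{i-1}(p,t)=\alpha_i(p,t)\cap\ker\omega^{(k-i+1)}(t)\subset\alpha_i(p,t)$; this is the nesting that statement \eqref{st1} must upgrade from planes to submanifolds.

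Next I characterize $i$-integrability by a first-order (Frobenius-type) condition $(C_i)$. On an $\alpha_i$-submanifold $N$ carrying its null field $t\colon N\to P^1$ one has $\iota^*\omega^{(m)}(t)=0$ for $0\le m\le k-i$, where $\iota\colon N\hookrightarrow M$; since these forms vanish on $N$, taking exterior derivatives and using $d\,\iota^*\omega^{(m)}(t)=\iota^*d\omega^{(m)}(t)+dt\wedge\iota^*\omega^{(m+1)}(t)$ gives, at every point and for every $t$ realized by the family,
$$d\omega^{(m)}(t)\in\langle\omega(t),\ldots,\omega^{(k-i)}(t)\rangle\ (0\le m\le k-i-1),\qquad d\omega^{(k-i)}(t)\in\langle\omega(t),\ldots,\omega^{(k-i+1)}(t)\rangle,$$
which I call $(C_i)$. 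The converse — that $(C_i)$ forces an $\alpha_i$-submanifold tangent to every $\alpha_i$-plane — is the substantive point: one lifts to the total space $\tilde M\to M$ of the curves, where $(C_i)$ says precisely that the integral elements of the Pfaffian system $\{\omega^{(0)}(t),\ldots,\omega^{(k-i)}(t)\}$ exist and prolong, and one integrates them by the Frobenius theorem after prolongation (or by Cartan–Kähler in the analytic category). I expect this existence step to be the main obstacle; everything else is algebra.

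For statement \eqref{st1} it suffices to prove $(C_i)\Rightarrow(C_{i-1})$ and to induct downward. The inclusions $\langle\omega,\ldots,\omega^{(k-i)}\rangle\subseteq\langle\omega,\ldots,\omega^{(k-i+1)}\rangle$ turn every relation of $(C_i)$ into the corresponding relation of $(C_{i-1})$ for $0\le m\le k-i$; the only genuinely new relation, $d\omega^{(k-i+1)}(t)\in\langle\omega,\ldots,\omega^{(k-i+2)}\rangle$, I get by differentiating the last relation of $(C_i)$ in $t$, using $\partial_t\,d\omega^{(k-i)}=d\omega^{(k-i+1)}$ together with the elementary fact $\partial_t\langle\omega,\ldots,\omega^{(r)}\rangle\subseteq\langle\omega,\ldots,\omega^{(r+1)}\rangle$ (differentiate a representative $\sum_{l\le r}\omega^{(l)}\wedge\beta_l$ with $\beta_l$ depending smoothly on $t$).

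Statement \eqref{st2} is the degenerate top of this chain. For $i=k$ the range $0\le m\le k-i-1$ is empty, so $(C_k)$ reduces to the single relation $d\omega(t)\in\langle\omega(t),\omega'(t)\rangle$; this is verbatim the first relation of $(C_{k-1})$, while the remaining relation of $(C_{k-1})$, namely $d\omega'(t)\in\langle\omega,\omega',\omega''\rangle$, is exactly its $t$-derivative and hence automatic by the same differentiation lemma. Thus $(C_k)\Leftrightarrow(C_{k-1})$, which yields the equivalence of $k$- and $(k-1)$-integrability once the existence step of the second paragraph is in hand. I note that the equivalence is special to the top precisely because $(C_k)$ carries no "smaller-ideal" constraints, so that the extra constraint of $(C_{k-1})$ is only a derivative of what $(C_k)$ already asserts; for $i<k$ the reverse implication genuinely fails, in accordance with statement \eqref{st1} being one-directional there.
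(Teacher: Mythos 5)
Your reduction is sound as far as it goes, and it is worth noting that the paper itself offers no argument to compare against: its ``proof'' is a citation of \cite[Theorem 3.2]{K1}, so your plan, if completed, would be a genuinely self-contained alternative. The correct parts: the dual description $\alpha_i(p,t)=\ker\{\omega(t),\omega'(t),\ldots,\omega^{(k-i)}(t)\}$ is right; the derivation of the necessary conditions $(C_i)$ from the existence of $\alpha_i$-submanifolds is right (including the absorption of the $dt$-term into the ideal); and the differentiation lemma giving $(C_i)\Rightarrow(C_{i-1})$ and $(C_k)\Leftrightarrow(C_{k-1})$ is valid algebra, since $\omega(t),\ldots,\omega^{(k)}(t)$ form a coframe depending polynomially on $t$, so the coefficient forms in an ideal membership can indeed be chosen smoothly in $t$ and differentiated.

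The genuine gap is exactly the step you yourself flag as ``the main obstacle'': the sufficiency $(C_i)\Rightarrow i$-integrability. Both statements need it --- Statement \eqref{st1} through $(C_j)\Rightarrow j$-integrability, Statement \eqref{st2} in both directions --- and without it your argument produces only necessary conditions and never a single $\alpha$-submanifold. The appeal to ``Frobenius after prolongation (or Cartan--K\"ahler in the analytic category)'' does not close it. Frobenius does not apply: the kernel of the Pfaffian system $\{\omega^{(0)}(t),\ldots,\omega^{(k-i)}(t)\}$ on $M\times\R P^1$ has rank $i+1$ (it contains $\partial_t$), and it is \emph{never} involutive --- on the product one has $d\omega^{(k-i)}(t)=d_M\omega^{(k-i)}(t)+dt\wedge\omega^{(k-i+1)}(t)$, and pairing $\partial_t$ with a vector of the $\alpha_i$-plane on which $\omega^{(k-i+1)}(t)$ does not vanish shows this $2$-form is nonzero modulo the ideal, regardless of $(C_i)$. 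So the sought $i$-dimensional integral manifolds are strictly smaller than the kernel distribution, and their existence through every point is precisely what must be proven, not a consequence of Frobenius. Cartan--K\"ahler, on the other hand, requires a verification of involutivity in Cartan's sense (integral elements, characters, Cartan's test --- you assert ``the integral elements exist and prolong'' but verify nothing; a computation of the transverse integral elements shows they form an affine line over each point, and whether the prolonged system closes up is a nontrivial structure-dependent computation), and it requires real-analyticity, which is not among the hypotheses: the proposition concerns smooth $GL(2)$-structures. This existence step is the entire analytic content of the proposition and is exactly where the cited reference does its work; as written, your proof is a correct reformulation of the proposition as an unproven, and not routine, claim.
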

\begin{proof}
See the proof of \cite[Theorem 3.2]{K1}.
\end{proof}
The notion of $k$-integrability for $(k+1)$-dimensional $GL(2)$-structures was studied in \cite{K1} and \cite{FK} (under the name $\alpha$-integrability). It is immanently related to the geometry of ODEs, see e.g.~\cite{B,DT,K1}. In the present paper shall we concentrate on half-integrable structures provided that $k+1$ is even. In dimension $4$ the two notions, i.e. 2-integrability (or half-integrability in this case) and 3-integrability, coincide due to Statement \eqref{st2} of Proposition \ref{propInt}. In this case the family of 2-dimensional $\alpha$-manifolds is uniquely determined by the structure and is related to contact geometry of ODEs. On the other hand, the family of 3-dimensional $\alpha$-manifolds is ambiguous and is related to a choice of a point equivalent class of ODEs among a class of contactly equivalent ODEs, see \cite{K1}.

\section{A complex structure on $\V[k]$}\label{secLinear}

As before, let  $\V[k]$ be the $(k+1)$-dimensional space of homogeneous polynomials of order $k$ in two variables $(x,y)$. Our goal in this section is to equip $\V[k]$ with a complex structure (provided that $k+1=2l$ is even). This can be easily achieved by imposing the condition that the complex polynomials $z^k,z^{k-1}\bar z,\ldots,z^{k-l+1}\bar z^{l-1}$, where $z=x+\i y$, span the $\i$-eigenspace of the structure.  However, we shall start with a more explicit construction that will justify the construction from the point of view of the $CO(2)$ action. For this recall that, we consider the right action $V\cdot g$, where $g\in GL(2)$ and $V\in\V[k]$, defined by \eqref{eqaction}. Then
$$
CO(2)=\left\{ \left(\begin{array}{cc} u & -v\\ v & u\end{array}\right)\ |\ u^2+v^2\neq 0 \right\}
$$
is an Abelian subgroup of $GL(2)$. We shall consider the action of $CO(2)$ on $\V[k]$. In order to have explicit formulae we define the following polynomials
\begin{equation}\label{eqW}
\begin{aligned}
&W_i^{even}(x,y)=\sum_{s=0}^{\lfloor \frac{i}{2}\rfloor}(-1)^s\binom{i}{2s}x^{i-2s}y^{2s},\\
&W_i^{odd}(x,y)=\sum_{s=0}^{\lfloor \frac{i-1}{2}\rfloor}(-1)^s\binom{i}{2s+1}x^{i-2s-1}y^{2s+1}.
\end{aligned}
\end{equation}
Then, we have
\begin{lemma}\label{lem1}
For any $g\in CO(2)$
\begin{equation}\label{eq1}
\left(W_i^{even}\cdot g,  W_i^{odd}\cdot g\right)=\left(W_i^{even}, W_i^{odd}\right)(g^i),
\end{equation}
where on the right hand side a row vector $\left(W_i^{even}, W_i^{odd}\right)$ is multiplied by the matrix $g^i$ -- the $i$th power of $g$.
\end{lemma}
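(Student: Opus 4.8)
The plan is to recognize $W_i^{even}$ and $W_i^{odd}$ as the real and imaginary parts of a single complex power, and then to compute the $CO(2)$-action in the complex picture, where it reduces to scalar multiplication. Setting $z=x+\i y$ and expanding by the binomial theorem, one separates the even and odd powers of $\i$ to obtain the generating identity
\begin{equation*}
(x+\i y)^i=W_i^{even}(x,y)+\i\,W_i^{odd}(x,y),
\end{equation*}
so that $W_i^{even}=\operatorname{Re}(z^i)$ and $W_i^{odd}=\operatorname{Im}(z^i)$ once $x,y$ are treated as real variables. This is an immediate consequence of the formulae \eqref{eqW}, and I would verify it first.

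Next I would compute how $z$ transforms under the right action \eqref{eqaction}. Since that action is simply substitution of variables, it is an algebra homomorphism on polynomials, and it suffices to track $z$ itself. Writing $g=\left(\begin{array}{cc} u & -v\\ v& u\end{array}\right)$ and substituting $(x,y)\mapsto(x,y)g=(ux+vy,\,uy-vx)$ gives
\begin{equation*}
z\cdot g=(ux+vy)+\i(uy-vx)=(u-\i v)(x+\i y)=\bar c\,z,
\end{equation*}
where $c=u+\i v$ is the complex number represented by $g$ under the standard isomorphism $CO(2)\simeq\C^*$ (the matrix acts on column vectors as multiplication by $u+\i v$). Raising to the $i$th power and using both the homomorphism property and the generating identity, I obtain
\begin{equation*}
\bigl(W_i^{even}+\i\,W_i^{odd}\bigr)\cdot g=(z\cdot g)^i=\bar c^{\,i}z^i=\overline{c^i}\,\bigl(W_i^{even}+\i\,W_i^{odd}\bigr).
\end{equation*}

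Finally I would translate this scalar identity into the matrix statement \eqref{eq1}. The key observation is that right multiplication of a row vector $(p,q)\leftrightarrow p+\i q$ by the matrix $\left(\begin{array}{cc} A & -B\\ B& A\end{array}\right)$ associated with $w=A+\i B$ corresponds to multiplication by the \emph{conjugate} $\bar w$, since $(p,q)\left(\begin{array}{cc} A & -B\\ B& A\end{array}\right)=(Ap+Bq,\,-Bp+Aq)\leftrightarrow(A-\i B)(p+\i q)$. As $g^i$ is exactly the matrix associated with $c^i$, right multiplication of $(W_i^{even},W_i^{odd})$ by $g^i$ realizes multiplication of $W_i^{even}+\i W_i^{odd}$ by $\overline{c^i}$, which matches the action just computed. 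Because $g$ has real entries, $W_i^{even}\cdot g$ and $W_i^{odd}\cdot g$ remain real polynomials, so one may legitimately compare real and imaginary parts, which yields \eqref{eq1}.

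The computation carries no genuine obstacle; the one point demanding care is the complex conjugation. The group action rotates $z$ by $\bar c$ rather than $c$, whereas the target matrix is $g^i\simeq c^i$, not its conjugate. The identity is consistent precisely because right multiplication by $g^i$ on a row vector induces multiplication by $\overline{c^i}$, so the two conjugations coincide and no sign discrepancy survives.
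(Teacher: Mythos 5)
Your proof is correct, and it takes a genuinely different route from the paper's. The paper argues by induction on $i$: the base case $i=1$ is checked directly from the definition of the action, and the inductive step uses the recursion $\left(W_{i+1}^{even}, W_{i+1}^{odd}\right)=\left(W_i^{even}, W_i^{odd}\right)\left(\begin{array}{cc} x & y\\ -y & x\end{array}\right)$ together with the fact that all matrices involved ($g$, its powers, and the matrix above) lie in a commutative algebra isomorphic to $\C$ --- this is what the paper's phrase ``having in mind that $CO(2)$ is Abelian'' is doing. You instead collapse the statement into a single closed-form computation: the generating identity $(x+\i y)^i=W_i^{even}+\i\,W_i^{odd}$ (which indeed follows from \eqref{eqW} by the binomial theorem), the fact that the substitution action is an algebra homomorphism, and the observation $z\cdot g=\bar c\,z$ for $c=u+\i v$, giving $(z^i)\cdot g=\overline{c^i}\,z^i$ at once. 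Your handling of the one delicate point is also right: right multiplication of a row vector by $g^i$ realizes multiplication by $\overline{c^i}$ under $(p,q)\leftrightarrow p+\i q$, so the conjugation produced by the action and the one produced by the row-vector convention cancel against each other, and comparing real and imaginary parts is legitimate because $g$ has real entries. As for what each approach buys: the paper's induction stays entirely within real matrix algebra and yields the recursion for the $W_i$'s as a by-product, while your argument is shorter, avoids induction, and makes the structural reason for the lemma transparent --- $CO(2)\simeq\C^*$ acts on $z^i$ through the $i$-th power of a complex scalar. This is precisely the complex-analytic viewpoint the paper itself adopts shortly afterwards; compare your generating identity with formula \eqref{eq3b} for $X_{k,j}$.
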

\begin{proof}
Let
$$
g=\left(\begin{array}{cc} u & -v \\ v & u\end{array}\right).
$$
If $i=1$ then $W_i^{even}(x,y)=x$ and $W_i^{odd}(x,y)=y$ and \eqref{eq1} holds directly from the definition, since $x\cdot g=(ux+vy)$ and $y\cdot g=(-vx+uy)$. If $i>1$ then we proceed by induction on $i$. Note that
$$
\begin{aligned}
W_{i+1}^{even}(x,y)=xW_i^{even}(x,y)-yW_i^{odd}(x,y),\\
W_{i+1}^{odd}(x,y)=yW_i^{even}(x,y)+xW_i^{odd}(x,y),
\end{aligned}
$$
i.e. $(W_{i+1}^{even}(x,y), W_{i+1}^{odd}(x,y))=(W_i^{even}(x,y), W_i^{odd}(x,y))\left(\begin{array}{cc} x & y \\ -y & x\end{array}\right)$. Thus, assuming \eqref{eq1} for $i$, having in mind that $CO(2)$ is Abelian.
\end{proof}

Denote
\begin{equation}\label{eqV}
\begin{aligned}
&V_{k,j}^{even}(x,y)=(x^2+y^2)^jW_{k-2j}^{even}(x,y),\\
&V_{k,j}^{odd}(x,y)=(x^2+y^2)^jW_{k-2j}^{odd}(x,y).
\end{aligned}
\end{equation}
Directly from Lemma \ref{lem1} we get
\begin{proposition}\label{prop0}
$\V[k]$ decomposes into the following sum of components irreducible with respect to $CO(2)$
$$
\V[k]=\bigoplus_{j=0}^{\lfloor\frac{k}{2}\rfloor}\V[k,k-2j],
$$
where
$$
\V[k,k-2j]=\spn\left\{V_{k,j}^{even}(x,y),\ V_{k,j}^{odd}(x,y)\right\},
$$
and
$$
\V[k,0]=\spn\{(x^2+y^2)^{\frac{k}{2}}\}
$$
provided that $k$ is even.
\end{proposition}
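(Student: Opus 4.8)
The plan is to read the entire decomposition off the transformation law already encoded in Lemma \ref{lem1}, together with one elementary computation. First I would record how the factor $(x^2+y^2)^j$ transforms: writing $g=\left(\begin{smallmatrix} u & -v\\ v& u\end{smallmatrix}\right)$ we have $x\cdot g=ux+vy$ and $y\cdot g=-vx+uy$, whence $(x^2+y^2)\cdot g=(u^2+v^2)(x^2+y^2)=\det(g)\,(x^2+y^2)$. Since the action is multiplicative on products of polynomials, $(x^2+y^2)^j\cdot g=\det(g)^j(x^2+y^2)^j$; combining this with Lemma \ref{lem1} gives
\[
\left(V_{k,j}^{even}\cdot g,\ V_{k,j}^{odd}\cdot g\right)=\det(g)^j\left(V_{k,j}^{even},\ V_{k,j}^{odd}\right)\left(g^{k-2j}\right).
\]
In particular each $\V[k,k-2j]$ is $CO(2)$-invariant, which is the first half of the statement.

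For irreducibility I would argue on the two-dimensional blocks, i.e. when $k-2j\ge 1$. The matrix $g^{k-2j}$ is again of the form $\left(\begin{smallmatrix} * & -*\\ * & *\end{smallmatrix}\right)$, a nonzero scalar multiple of a rotation; choosing $g$ to be a genuine rotation by an angle $\theta$ with $(k-2j)\theta\notin\pi\Z$ produces a linear map of $\V[k,k-2j]$ with no real eigenvector, so no proper invariant line can exist and the block is irreducible over $\R$. The remaining block, occurring only when $k$ is even and $j=k/2$, is one-dimensional and spanned by $(x^2+y^2)^{k/2}$ (here $W_0^{odd}=0$, so $V_{k,k/2}^{odd}$ vanishes); it is irreducible trivially, the action being the scalar $\det(g)^{k/2}$.

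What remains is that these blocks exhaust $\V[k]$ and meet only in $0$. The cleanest route, which I would take, is to complexify. A direct check of the binomial sums in \eqref{eqW} identifies $W_i^{even}$ and $W_i^{odd}$ as the real and imaginary parts of $z^i$, where $z=x+\i y$; that is, $z^i=W_i^{even}+\i\,W_i^{odd}$. Since $x^2+y^2=z\bar z$, this yields the compact formula
\[
V_{k,j}^{even}+\i\,V_{k,j}^{odd}=z^{\,k-j}\bar z^{\,j}.
\]
Now $\{z^{\,k-j}\bar z^{\,j}\}_{j=0}^{k}$ is obtained from the monomial basis of $\V[k]\otimes\C$ by the invertible substitution $(x,y)\mapsto(z,\bar z)$, hence is itself a basis; and as $j$ runs over $0,\ldots,\lfloor k/2\rfloor$ the unordered pairs $\{j,k-j\}$ partition $\{0,\ldots,k\}$. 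Therefore the real and imaginary parts $V_{k,j}^{even},V_{k,j}^{odd}$ (with $V_{k,k/2}^{odd}=0$ in the even case) are linearly independent and $k+1$ in number, so they form a basis of $\V[k]$ adapted to the blocks above; this gives both the spanning and the directness of the sum.

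I expect the only genuinely delicate point to be this last bookkeeping — confirming that the two-dimensional blocks tile $\V[k]$ without overlap and correctly account for the parity of $k$ (the degenerate central block when $k$ is even). The complex substitution trivialises it, since in the eigenbasis $z^{\,k-j}\bar z^{\,j}$ the action is diagonal with eigenvalue $\bar\lambda^{\,k-j}\lambda^{\,j}$ (for $\lambda=u+\i v$), so distinct $j$ give distinct eigenvalues and independence is automatic. One should only be careful to phrase irreducibility over $\R$ rather than $\C$, since over $\C$ each two-dimensional block splits.
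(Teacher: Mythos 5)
Your proof is correct and follows essentially the same route as the paper: the invariance of each block $\V[k,k-2j]$ comes from Lemma \ref{lem1} combined with the observation that $(x^2+y^2)\cdot g=(u^2+v^2)(x^2+y^2)$, which is the entire content of the paper's (very terse) proof. The additional details you supply --- irreducibility of the two-dimensional blocks via a rotation with no real eigenvector, and the spanning/directness bookkeeping via the complexified basis $z^{k-j}\bar z^{j}$ --- are exactly the steps the paper leaves implicit, and your complex substitution is precisely the paper's own formula \eqref{eq3b}, which it derives immediately after the proposition.
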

\begin{proof}
Taking into account Lemma \ref{lem1} it is sufficient to notice that $(x^2+y^2)$ is preserved up to a constant by $CO(2)$. Indeed, if $g=\left(\begin{array}{cc} u & -v \\ v & u\end{array}\right)\in CO(2)$ then $ (x^2+y^2)\cdot g=(u^2+v^2)(x^2+y^2)$.
\end{proof}

Note that $\dim \V[k,i]=2$ if $i>0$ and $\dim \V[k,0]=1$. We will say that $\V[k,k-2j]$ has weight $k-2j$.

Let $\fj$ be the standard complex structure on $\R^2$, $\fj(x,y)=(y,-x)$, i.e.
$$
\fj=\left(\begin{array}{cc} 0 & -1\\ 1 & 0\end{array}\right),
$$
as a matrix acting on $(x,y)$ on the right. Moreover, let
$$
\sqrt[i]{\fj}=\left(\begin{array}{cc} \cos(\frac{\pi}{2i}) & -\sin(\frac{\pi}{2i})\\ \sin(\frac{\pi}{2i}) & \cos(\frac{\pi}{2i})\end{array}\right)
$$
denotes the principal $i$-root of $\fj$, $i\in\N$. Then $\fj$ as well as $\sqrt[i]{\fj}$ for any $i\in \N$ belong to $CO(2)$. Thus, one can look how $\sqrt[i]{\fj}$ acts on each irreducible component $\V[k,i]$ for any fixed weight $i$. The following proposition is a consequence of formula \eqref{eq1} in Lemma \ref{lem1}. 
\begin{proposition}\label{prop1}
For any $i\in\N$ the $i$-root $\sqrt[i]{\fj}$ is a complex structure commuting with the action of $CO(2)$ that is unique up to the conjugation.
\end{proposition}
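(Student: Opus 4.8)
The plan is to determine the restriction of $\sqrt[i]{\fj}$ to the weight-$i$ summand $\V[k,i]$ explicitly, using Proposition \ref{prop0} together with formula \eqref{eq1} of Lemma \ref{lem1}, and then to classify all $CO(2)$-invariant complex structures on this two-dimensional space.

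First I would fix a weight $i=k-2j$ and work in the basis $V_{k,j}^{even}, V_{k,j}^{odd}$ of $\V[k,i]$ furnished by Proposition \ref{prop0}. Setting $g=\sqrt[i]{\fj}$ we have $u=\cos(\frac{\pi}{2i})$ and $v=\sin(\frac{\pi}{2i})$, so $u^2+v^2=1$ and therefore the factor $(x^2+y^2)^j$ is fixed by $g$ (this is exactly the computation in the proof of Proposition \ref{prop0}). It thus remains to track the pair $(W_i^{even},W_i^{odd})$, and here Lemma \ref{lem1} applies verbatim: the action of $g$ on this pair is multiplication by $g^i$. Since $g$ is the rotation by $\frac{\pi}{2i}$, its $i$-th power $g^i$ is the rotation by $\frac{\pi}{2}$, that is $g^i=\fj$. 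Hence in the basis $(V_{k,j}^{even},V_{k,j}^{odd})$ the operator $\sqrt[i]{\fj}$ acts on $\V[k,i]$ precisely by the matrix $\fj$, and because $\fj^2=-I$ its restriction squares to $-\mathrm{id}$; this is the asserted complex structure.

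The commutation with $CO(2)$ is then immediate: $CO(2)$ is Abelian and $\sqrt[i]{\fj}$ is itself one of its elements, so on the invariant subspace $\V[k,i]$ the induced operator commutes with the action of every element of $CO(2)$.

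The only genuinely nontrivial point — and hence the step I expect to be the main obstacle — is uniqueness up to conjugation. For this I would observe that, by the normalisation above, the $CO(2)$-action on $\V[k,i]$ is equivalent to the action $g\mapsto g^i$ on $\R^2$, a rotation-plus-scaling representation that is irreducible and of complex type. By a Schur-type argument its commutant is the two-dimensional algebra $\R I\oplus\R\fj\cong\C$. Any $CO(2)$-invariant complex structure $J$ lies in this commutant, so $J=aI+b\fj$, and the condition $J^2=-\mathrm{id}$ forces $a=0$, $b=\pm1$, i.e.\ $J=\pm\fj$. These two structures are conjugate: the real reflection $\mathrm{diag}(1,-1)$ normalises $CO(2)$ and conjugates $\fj$ to $-\fj$ (equivalently, $-\fj$ is just the conjugate complex structure $\bar\fj$). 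Therefore every $CO(2)$-invariant complex structure on $\V[k,i]$ is conjugate to $\sqrt[i]{\fj}$, which completes the plan.
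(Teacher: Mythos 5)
Your proposal is correct and takes essentially the same approach as the paper: the paper offers no separate proof, stating Proposition \ref{prop1} as a direct consequence of formula \eqref{eq1} in Lemma \ref{lem1}, and your argument is exactly that consequence worked out --- using \eqref{eq1} to see that $\sqrt[i]{\fj}$ acts on $\V[k,i]$ as the matrix $\fj$ (hence squares to $-\mathrm{id}$ and commutes with the Abelian $CO(2)$-action). Your Schur-commutant classification of the invariant complex structures, yielding $J=\pm\fj$ and their conjugacy via $\mathrm{diag}(1,-1)$, simply fills in the uniqueness detail that the paper leaves implicit.
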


Let $k$ be odd. We introduce a complex structure $\fJ^k$ on $\V[k]$ in the following way
\begin{equation}\label{defJ}
\fJ^k|_{\V[k,i]}=(\sqrt[i]{\fj}).
\end{equation}
If $k$ is even, then the same formula defines a complex structure on $\V[k,>0]=\bigoplus_{j=1}^{\frac{k}{2}}\V[k,2j]$. By \eqref{eq1}, we have
\begin{equation}\label{eq2}
\fJ^k(V_{k,j}^{even})=V_{k,j}^{odd},\qquad \fJ^k(V_{k,j}^{odd})=-V_{k,j}^{even}.
\end{equation}
In particular
$$
X_{k,j}=V_{k,j}^{even}-\i V_{k,j}^{odd}
$$
is an eigenvector of $\fJ^k$ corresponding to $\i$. Note that \eqref{eqW} and \eqref{eqV} imply
\begin{equation}\label{eq3}
X_{k,j}(x,y)=(x^2+y^2)^j(x-\i y)^{k-2j}.
\end{equation}
Moreover, \eqref{eq3} can be rewritten as
\begin{equation}\label{eq3b}
X_{k,j}(x,y)=(x+\i y)^j(x-\i y)^{k-j},
\end{equation}
where now we can extend the range of the index $j$ to $0,\ldots,k$. Indeed, $X_{k,j}$ is a well defined complex-valued polynomial for all $j=0,\ldots,k$. Additionally, the following formula for the conjugate polynomials holds
$$
\bar X_{k,j}=X_{k,k-j}.
$$
Let $(\xi^{k,j}_{even},\xi^{k,j}_{odd}\ |\ j=0,1,\ldots,\lfloor\frac{k}{2}\rfloor)$ be the basis of the space $\V[k]^*$, or $\V[k,>0]^*$ if $k$ is even, dual to the basis $(V_{k,j}^{even},V_{k,j}^{odd}\ |\ j=0,1,\ldots,\lfloor\frac{k}{2}\rfloor)$ of $\V[k]^*$, or $\V[k,>0]^*$, respectively. Then the $(1,0)$-forms for $\fJ^k$ are of the form
\begin{equation}\label{eq4}
\xi^{k,j}=\xi^{k,j}_{even}+\i \xi^{k,j}_{odd},
\end{equation}
where $j=0,1,\ldots,\lfloor\frac{k}{2}\rfloor$.

We summarize our considerations in the following
\begin{proposition}\label{prop2}
$\fJ^k$ is a complex structure on $\V[k]$ (or $\V[k,>0]$ if $k$ is even) commuting with the action of $CO(2)$. On each irreducible component $\V[k,k-2j]$ the complex structure is defined by the $(1,0)$-form $\xi^{k,j}$.
\end{proposition}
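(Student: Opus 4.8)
The plan is to unbundle the statement into its three component claims and check each one separately, in each case reducing to a single irreducible summand and then propagating across the decomposition of Proposition~\ref{prop0}. Concretely, I would verify: (i) $(\fJ^k)^2=-\mathrm{id}$; (ii) $\fJ^k$ commutes with the $CO(2)$-action; and (iii) on $\V[k,k-2j]$ the complex structure $\fJ^k$ is cut out by the $(1,0)$-form $\xi^{k,j}$, in the sense that $\xi^{k,j}\circ\fJ^k=\i\,\xi^{k,j}$. Because $\fJ^k$ is defined block-diagonally by \eqref{defJ}, it is enough to argue on a fixed component $\V[k,i]$ with $i=k-2j>0$ and then sum over $j$; in the even case one simply drops the one-dimensional summand $\V[k,0]$ and works on $\V[k,>0]$.

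For (i) and (ii) I would invoke Lemma~\ref{lem1} and Proposition~\ref{prop1}. The key computation is to see how $CO(2)$ acts on the pair $(V_{k,j}^{even},V_{k,j}^{odd})$: combining \eqref{eqV} with \eqref{eq1}, for $g\in CO(2)$ one gets $(V_{k,j}^{even}\cdot g,\,V_{k,j}^{odd}\cdot g)=(u^2+v^2)^j\,(V_{k,j}^{even},\,V_{k,j}^{odd})\,g^{\,k-2j}$, since $(x^2+y^2)^j$ is scaled by $(u^2+v^2)^j$ and the $W$-part transforms by $g^{\,k-2j}$. Taking $g=\sqrt[i]{\fj}$ with $i=k-2j$, its entries satisfy $u^2+v^2=1$ and $(\sqrt[i]{\fj})^{\,i}=\fj$, so the operator $\fJ^k|_{\V[k,i]}$ acts on the spanning pair via right multiplication by the matrix $\fj$. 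This reproduces \eqref{eq2}, and since $\fj^2=-\mathrm{id}$ we conclude $(\fJ^k)^2=-\mathrm{id}$ on each block, hence on the whole space. Equivariance is then automatic: $\fJ^k|_{\V[k,i]}$ is realised by an element of the Abelian group $CO(2)$ acting through the very representation with which $CO(2)$ acts, so the two actions commute on each $CO(2)$-invariant summand, and therefore commute globally.

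Claim (iii) is a direct duality computation. Using the dual basis $(\xi^{k,j}_{even},\xi^{k,j}_{odd})$ and \eqref{eq2}, I would evaluate $\xi^{k,j}\circ\fJ^k$ on the two basis vectors: $\xi^{k,j}(\fJ^k V_{k,j}^{even})=\xi^{k,j}(V_{k,j}^{odd})=\i$ matches $\i\,\xi^{k,j}(V_{k,j}^{even})=\i$, and $\xi^{k,j}(\fJ^k V_{k,j}^{odd})=-\xi^{k,j}(V_{k,j}^{even})=-1$ matches $\i\,\xi^{k,j}(V_{k,j}^{odd})=\i^2=-1$. Hence $\xi^{k,j}$ spans the space of $(1,0)$-forms on $\V[k,k-2j]$, confirming \eqref{eq4}.

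The main point requiring care is not a deep obstacle but the weight bookkeeping in steps (i) and (ii): one must notice that the representation restricted to the weight-$i$ summand factors through $g\mapsto g^i$, so that extracting the principal $i$th root of $\fj$ is exactly what makes the induced block operator equal the honest complex structure $\fj$, rather than the power $\fj^{\,i}$ that the naive choice $g=\fj$ would produce. The accompanying subtlety is that this forces the restriction to $\V[k,>0]$ in the even case, since $\V[k,0]$ is one-dimensional and admits no complex structure.
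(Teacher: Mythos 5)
Your proposal is correct and takes essentially the same approach as the paper: there Proposition \ref{prop2} is stated as a summary of the preceding considerations (Lemma \ref{lem1}, Propositions \ref{prop0} and \ref{prop1}, the definition \eqref{defJ}, and formulas \eqref{eq2}--\eqref{eq4}), which is precisely the chain you reconstruct, namely the weight-$(k-2j)$ block action $g\mapsto g^{k-2j}$ scaled by $(u^2+v^2)^j$, the resulting identity $(\fJ^k)^2=-\mathrm{id}$, commutativity coming from the fact that $\fJ^k$ acts blockwise through the Abelian group $CO(2)$ itself, and the dual-basis verification that $\xi^{k,j}$ is the $(1,0)$-form on $\V[k,k-2j]$.
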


\begin{remark}
As follows from Proposition \ref{prop1}, $\fJ^k$ is unique up to a conjugation on each $\V[k,i]$. Therefore, one can change $\fJ^k$ on each factor $\V[k,i]$ and get a different complex structure on $\V[k]$ that commutes with $CO(2)$ and is, a priori, as good as $\fJ^k$. However, as it will follow from \eqref{eqtor3} in Section \ref{secGL2} the definition \eqref{defJ} is the unique one (up to a conjugation on whole $\V[k]$) such that the almost-complex structure defined in Section \ref{secGL2} for the flat $GL(2)$-structure is integrable.
\end{remark}

\begin{examples}
Let us consider low-dimensional examples for future applications. We shall denote by $(\omega^0,\ldots,\omega^k)$ the basis of $\V[k]^*$ dual to the standard basis of monomials, i.e.
$$
\omega^i(x^{k-i}y^i)=\delta^i_j.
$$
If $k=1$, then
$$
\begin{aligned}
&V_{1,0}^{even}(x,y)=x,\qquad V_{1,0}^{odd}(x,y)=y,
\end{aligned}
$$
and one gets that (1,0)-forms are proportional to
$$
\xi^{1,0}=\omega^0+\i\omega^1.
$$
If $k=3$, then
$$
\begin{aligned}
V_{3,0}^{even}(x,y)=x^3-3xy^2,&\qquad V_{3,0}^{odd}(x,y)=3x^2y-y^3,\\
V_{3,1}^{even}(x,y)=x^3+xy^2,&\qquad V_{3,1}^{odd}(x,y)=x^2y+y^3,
\end{aligned}
$$
and the set of (1,0)-forms on $\V$ is spanned by
$$
\xi^{3,1}=\frac{1}{4}(3\omega^0+\omega^2+\i(\omega^1+3\omega^3)),\qquad \xi^{3,0}=\frac{1}{4}(\omega^0-\omega^2+\i(\omega^1-\omega^3)).
$$
If $k=5$, then
$$
\begin{aligned}
V_{5,0}^{even}(x,y)=x^5-10x^3y^2+5xy^4,&\qquad V_{5,0}^{odd}(x,y)=5x^4y-10x^2y^3+y^5,\\
V_{5,1}^{even}(x,y)=x^5-2x^3y^2-3xy^4,&\qquad V_{5,1}^{odd}(x,y)=3x^4y+2x^2y^3-y^5,\\
V_{5,2}^{even}(x,y)=x^5+2x^3y^2+xy^4,&\qquad V_{5,2}^{odd}(x,y)=x^4y+2x^2y^3+y^5,
\end{aligned}
$$ 
and the set of (1,0)-forms on $\V$ is spanned by
$$
\begin{aligned}
&\xi^{5,2}=\frac{1}{76}(10\omega^0+7\omega^2+12\omega^4+\i(12\omega^1+7\omega^3+10\omega^5)),\\
&\xi^{5,1}=\frac{1}{76}(5\omega^0-6\omega^2-13\omega^4+\i(13\omega^1+6\omega^3-5\omega^5)),\\
&\xi^{5,0}=\frac{1}{76}(\omega^0-5\omega^2+5\omega^4+\i(5\omega^1-5\omega^3+\omega^5)).
\end{aligned}
$$
Finally, if $k=2$, then
$$
V_{2,0}^{even}(x,y)=x^2-y^2,\qquad V_{2,0}^{odd}=2xy,
$$
and the set of (1,0)-forms on $\V[2,2]$ is spanned by
$$
\xi^{2,0}=\frac{1}{2}(\omega^0-\omega^2+\i\omega^1).
$$
\end{examples}

\section{Complex structures for $GL(2)$-structures in even dimension}\label{secGL2}
In this section we utilize the algebraic construction of Section \ref{secLinear} and define an almost-complex structure for a $GL(2)$-structure equipped with an adapted connection. Here an adapted connection is understood as a linear connection preserving the adapted co-frame bundle $F$ of Section \ref{secPre}. Thus, it is a $GL(2)$-connection in a sense of $G$-structures. In the next section we shall define a canonical connection for any $GL(2)$-structure in dimension $\geq 4$. Applying the results of the present section to the canonical connection one gets a canonical almost-complex structure constructed out of a $GL(2)$-structure.

Let $M$ be a $(k+1)$-dimensional manifold with a $GL(2)$-structure $TM\simeq S^k(E)$. Let $\pi\colon B(E)\to M$ be the corresponding reduction of the frame bundle. We assume that $k$ is odd. Let $\omega=(\omega^0,\ldots,\omega^k)^t$ be the canonical soldering form on $B(E)$ taking values in $\V[k]$ with the standard basis $(x^k, x^{k-1}y,\ldots,xy^{k-1},y^k)$. Further, let $\xi=(\xi^{k,0},\ldots,\xi^{k,\lfloor\frac{k}{2}\rfloor})^t$ be the complex-valued 1-forms on $B(E)$ composed from $\omega^i$'s using the formula \eqref{eq4}, where we treat $(\xi^{k,j}_{even},\xi^{k,j}_{odd}\ |\ j=0,1,\ldots,\lfloor\frac{k}{2}\rfloor)$ as horizontal 1-forms on $B(E)$.
\begin{theorem}\label{thm1}
Let $TM\simeq S^k(E)$ be a $GL(2)$-structure  on even-dimensional manifold $M$ and assume that a $GL(2)$-connection on $B(E)$ is defined by a 1-form $\phi=(\phi^i_j)_{i,j=1,2}$ with values in $\gl(2)$. Then there is a canonical almost-complex structure $\fJ_\phi$ on the quotient bundle
$$
B(E)/CO(2)
$$
whose $(1, 0)$-forms pullback to $B(E)$ to become linear combinations of the forms $\xi^{k,0},\ldots,\xi^{k,\lfloor\frac{k}{2}\rfloor}$ and
$$
\zeta=(\phi^1_2+\phi^2_1)+\i(\phi^2_2-\phi^1_1).
$$
\end{theorem}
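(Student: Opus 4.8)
The plan is to define $\fJ_\phi$ by prescribing its $(1,0)$ cotangent distribution on $B(E)$ and then verifying two things: that this distribution descends to the quotient $B(E)/CO(2)$, and that on the quotient it is the $(1,0)$-space of an almost-complex structure, i.e. it is complementary to its own conjugate. Set $D:=\spn_\C\{\xi^{k,0},\ldots,\xi^{k,\lfloor k/2\rfloor},\zeta\}\subset T^*_\C B(E)$. The connection $\phi$ splits $T^*B(E)$ into its horizontal part, spanned by the soldering components $\omega^0,\ldots,\omega^k$ (equivalently by the real and imaginary parts of the $\xi^{k,j}$), and its vertical part, spanned by $\phi^1_1,\phi^1_2,\phi^2_1,\phi^2_2$. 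Writing $\mathfrak{co}(2)\subset\gl(2)$ for the Lie algebra of $CO(2)$ and $\mathfrak{m}$ for its $\mathrm{Ad}(CO(2))$-invariant complement (the trace-free symmetric matrices), the real and imaginary parts of $\zeta$, namely $\phi^1_2+\phi^2_1$ and $\phi^2_2-\phi^1_1$, are precisely the two $\mathfrak{m}$-components of $\phi$.

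For the \emph{descent}, I would first check that every form in $D$ is semibasic for the $CO(2)$-fibration, i.e. annihilates the fundamental vector field $X_A$ of each $A\in\mathfrak{co}(2)$. For the $\xi^{k,j}$ this is automatic since they are horizontal. For $\zeta$ one uses $\phi(X_A)=A$ and checks that $A\in\mathfrak{co}(2)$ kills both $\phi^1_2+\phi^2_1$ and $\phi^2_2-\phi^1_1$, so that $\zeta(X_A)=0$. Next I would establish $CO(2)$-invariance of $D$. For the horizontal forms this is exactly the content of Proposition \ref{prop2} combined with Lemma \ref{lem1}: the transformation law \eqref{eq1} shows that under $R_g$, $g\in CO(2)$, the $\i$-eigenvector $X_{k,j}$ is rescaled by a single complex factor, so each $\xi^{k,j}$ is a $CO(2)$-eigenform and $\spn_\C\{\xi^{k,j}\}$ is preserved. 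For $\zeta$ I would use the equivariance $R_g^*\phi=g^{-1}\phi g$ of the connection form together with the fact that $\mathrm{Ad}(CO(2))$ acts on $\mathfrak{m}$ as the weight-$2$ representation (conjugation by the rotation of angle $\theta$ rotates the trace-free symmetric part by $2\theta$); in complex terms this rescales $\zeta$ by a phase, so the line $\C\zeta$ is invariant as well. Being semibasic and $CO(2)$-invariant, $D$ is the pullback of a well-defined complex subbundle $\bar D\subset T^*_\C(B(E)/CO(2))$.

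For the \emph{nondegeneracy}, to see that $\bar D$ is the $(1,0)$-space of an almost-complex structure I would check that $\bar D\oplus\overline{\bar D}=T^*_\C(B(E)/CO(2))$, for which a dimension count together with linear independence suffices. Since $k$ is odd there are $(k+1)/2$ forms $\xi^{k,j}$ and one form $\zeta$, giving $(k+3)/2$ complex forms; together with their conjugates this is $k+3=\dim\big(B(E)/CO(2)\big)$ complex forms. Independence follows because $\fJ^k$ is a genuine complex structure on $\V[k]$ by Proposition \ref{prop2}, so $\{\xi^{k,j},\bar\xi^{k,j}\}$ is a basis of the complexified horizontal cotangent space, while $\zeta,\bar\zeta$ span the complexified fibre directions $\mathfrak{m}\otimes\C$, which are transverse to the horizontal part. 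Hence the real and imaginary parts of the $\xi^{k,j}$ and of $\zeta$ form a coframe on $B(E)/CO(2)$, and declaring $\bar D$ to be the $(1,0)$-forms defines the canonical $\fJ_\phi$.

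The main work is the equivariance of $\zeta$: one must verify that conjugation by $CO(2)$ leaves invariant the complex line spanned by $\zeta$, which hinges on identifying $\phi^1_2+\phi^2_1$ and $\phi^2_2-\phi^1_1$ as a weight-$2$ eigenpair for $\mathrm{Ad}(CO(2))$, hence as the real and imaginary parts of a single $CO(2)$-eigenform. For the horizontal forms the analogous statement is already supplied by Lemma \ref{lem1}; the genuinely new computation is on the connection side, and it is exactly here that the specific combination $\zeta=(\phi^1_2+\phi^2_1)+\i(\phi^2_2-\phi^1_1)$, rather than any other pairing of the off-diagonal and diagonal entries, is forced. A secondary point to handle carefully is that $D$ is only required to descend as a distribution, not form by form: the individual $\xi^{k,j}$ and $\zeta$ are merely eigenforms, so it is their span $\bar D$ that is canonically defined on the quotient.
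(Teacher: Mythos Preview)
Your proposal is correct and follows essentially the same approach as the paper: both arguments establish that each $\xi^{k,j}$ and $\zeta$ is a $CO(2)$-eigenform, using Lemma~\ref{lem1} for the horizontal forms and the $\mathrm{Ad}$-equivariance $R_g^*\phi=g^{-1}\phi g$ for $\zeta$, so that their span descends to the quotient. Your write-up is in fact more complete than the paper's terse proof, since you explicitly address semibasic-ness and the transversality $D\cap\bar D=0$ needed for $\fJ_\phi$ to be an almost-complex structure, points the paper leaves implicit.
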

\begin{proof}
We shall prove that the 1-forms $\xi^{k,j}$ and $\zeta$ are invariant up to a constant with respect to the right action of $CO(2)$. This can be checked directly using Lemma \ref{lem1} and the equivariance property of the soldering and the connection forms with respect to the $GL(2)$-action. Indeed, taking $g=\left(\begin{array}{cc} u & -v\\ v & u\end{array}\right)\in CO(2)$ and applying Lemma \ref{lem1} we get
$$
(R_g)^*\xi^{k,j}=\frac{\tilde u+\i\tilde v}{(u^2+v^2)^j}\xi^{k,j},
$$
where the coefficients $\tilde u$ and $\tilde v$ depend polynomially on $u$ and $v$ and are defined as entries of the matrix being the $(k-2j)$th power of the matrix $g^{-1}$, i.e. $(g^{-1})^{k-2j}=\left(\begin{array}{cc} \tilde u & -\tilde v\\ \tilde v & \tilde u\end{array}\right)$.  Similarly, the equivariance of $\phi$ implies
$$
(R_g)^*\zeta=\frac{1}{u^2+v^2}\left((u^2-v^2)-2\i uv\right)\zeta.
$$
\end{proof}

\begin{remark}
If $M$ is an odd-dimensional manifold then the subspace $\V[k,>0]$ of $\V[k]$ defines a corank-one distribution on $B(E)$ which, since it is $CO(2)$-invariant, descents to a corank-one distribution on $B(E)/CO(2)$. Then, analogously to Theorem \ref{thm1} one gets an almost CR-structure on $B(E)/CO(2)$ provided that a $GL(2)$-connection is given. We shall not elaborate on this in the present paper, leaving the odd-dimensional case for further research.
\end{remark}

Recall that the torsion $\Theta$ of $\phi$ is a horizontal 2-form defined by the structure equation
\begin{equation}\label{eqtor1}
d\omega=-\Phi\wedge\omega+\Theta.
\end{equation}
where
$$
\Phi=\left(\begin{array}{ccccc}
k\phi^1_1 & \phi^2_1 & 0 & \cdots &0\\
k\phi^1_2 & (k-1)\phi^1_1+\phi^2_2 & 2\phi^2_1 & \cdots& 0 \\ 
0 & (k-1)\phi^1_2 & (k-2)\phi^1_1+2\phi^2_2 & \cdots & 0 \\ 
\vdots & \vdots & \vdots & \ddots & \vdots \\
0 & 0 & 0 &\cdots & k\phi^2_2
\end{array}\right)
$$
is a 1-form with values in $\fg_k$. Similarly, the curvature $\Omega$ of $\phi$ is a horizontal 2-form defined by
\begin{equation}\label{eqcur1}
d\phi=-\phi\wedge\phi+\Omega.
\end{equation}
The structure equations can be also written in terms of $\xi$ and $\bar{\xi}$ instead of $\omega$, where $\bar\xi=(\bar\xi^{k,0},\ldots,\bar\xi^{k,\lfloor\frac{k}{2}\rfloor})^t$ is the complex conjugate of $\xi$. In particular we have the following decomposition
\begin{equation}\label{eqtor2}
d\xi=-\Lambda^0\wedge\xi-\Lambda^1\wedge\bar{\xi}+ T^{(2,0)}\xi\wedge\xi+T^{(1,1)}\xi\wedge\bar\xi+T^{(0,2)}\bar\xi\wedge\bar\xi
\end{equation}
where $\Lambda^0$ and $\Lambda^1$ are certain matrices of complex-valued 1-forms composed from the connection form $\phi$. $T^{(2,0)}$, $T^{(1,1)}$ and $T^{(0,2)}$ are torsion coefficients computed in the basis $(\xi,\bar\xi)$. Similarly we have
\begin{equation}\label{eqcur2}
d\zeta=\frac{1}{2}\i\zeta\wedge(\phi^1_2-\phi^2_1)+ C^{(2,0)}\xi\wedge\xi+C^{(1,1)}\xi\wedge\bar\xi+C^{(0,2)}\bar\xi\wedge\bar\xi,
\end{equation}
where the first term $\frac{1}{2}\i\zeta\wedge(\phi^1_2-\phi^2_1)$ is obtained by direct computations and  $C^{(2,0)}$, $C^{(1,1)}$ and $C^{(0,2)}$ are components of the curvature in the basis $(\xi,\bar\xi)$.

\begin{theorem}\label{thm2}
Let $TM\simeq S^k(E)$ be a $GL(2)$-structure on even-dimensional manifold $M$ and assume that a $GL(2)$-connection on $B(E)$ is defined by a 1-form $\phi=(\phi^i_j)_{i,j=1,2}$ with values in $\gl(2)$. Then the almost-complex structure $\fJ_\phi$ on $B(E)/CO(2)$ is integrable if and only if $T^{(0,2)}=0$ and $C^{(0,2)}=0$.
\end{theorem}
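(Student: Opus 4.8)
The plan is to use the Newlander--Nirenberg criterion in the form that an almost-complex structure is integrable if and only if the exterior derivative of every $(1,0)$-form has no $(0,2)$-component; equivalently, the ideal generated by the $(1,0)$-forms is closed under $d$. By Theorem \ref{thm1}, the $(1,0)$-forms for $\fJ_\phi$ on $B(E)/CO(2)$ pull back to $B(E)$ to span (over the relevant functions) the forms $\xi^{k,0},\ldots,\xi^{k,\lfloor k/2\rfloor}$ together with $\zeta$. Since these forms are $CO(2)$-semibasic (invariant up to a nonvanishing factor, as shown in the proof of Theorem \ref{thm1}), the integrability condition downstairs on $B(E)/CO(2)$ is equivalent to the condition that the differential ideal generated by $\xi$ and $\zeta$ on $B(E)$ contains no $(0,2)$-part, i.e. no $\bar\xi\wedge\bar\xi$ terms, modulo the ideal itself. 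First I would record this reduction carefully, checking that passing to the quotient by $CO(2)$ does not introduce extra integrability obstructions beyond those already visible in $d\xi$ and $d\zeta$.

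The heart of the argument is then to read off the $(0,2)$-components directly from the structure equations \eqref{eqtor2} and \eqref{eqcur2}. For the forms $\xi$, equation \eqref{eqtor2} gives
$$
d\xi=-\Lambda^0\wedge\xi-\Lambda^1\wedge\bar\xi+T^{(2,0)}\xi\wedge\xi+T^{(1,1)}\xi\wedge\bar\xi+T^{(0,2)}\bar\xi\wedge\bar\xi.
$$
Working modulo the ideal generated by $\xi$ and $\zeta$, every term containing a factor of $\xi$ drops out; the terms $\Lambda^0\wedge\xi$ and $T^{(2,0)}\xi\wedge\xi$ and $T^{(1,1)}\xi\wedge\bar\xi$ therefore vanish modulo the ideal. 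I must then argue that $\Lambda^1\wedge\bar\xi$ likewise lies in the ideal: this requires examining the entries of $\Lambda^1$, which are built from the connection form $\phi$, and showing that their $(0,2)$-contribution is either absorbed into $\zeta$ and $\bar\zeta$ or else forced to vanish. The surviving genuine $(0,2)$-obstruction from $d\xi$ is exactly $T^{(0,2)}\bar\xi\wedge\bar\xi$. The same analysis applied to \eqref{eqcur2} shows that modulo the ideal the term $\tfrac{1}{2}\i\zeta\wedge(\phi^1_2-\phi^2_1)$ disappears and the $(2,0)$ and $(1,1)$ curvature terms drop out, leaving $C^{(0,2)}\bar\xi\wedge\bar\xi$ as the only $(0,2)$-obstruction from $d\zeta$. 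Hence the ideal is closed under $d$ precisely when $T^{(0,2)}=0$ and $C^{(0,2)}=0$.

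The main obstacle I anticipate is controlling the connection-dependent $1$-forms $\Lambda^0$, $\Lambda^1$ and the term $\tfrac12\i\zeta\wedge(\phi^1_2-\phi^2_1)$: I have to be sure that all the contributions coming from $\phi$ (as opposed to the torsion and curvature coefficients) either lie in the span of $\xi$, $\bar\xi$, $\zeta$, $\bar\zeta$ in the right way or cancel, so that no hidden $\bar\xi\wedge\bar\xi$ term survives apart from $T^{(0,2)}$ and $C^{(0,2)}$. Concretely, the entries of $\Lambda^1$ and the off-diagonal combination $\phi^1_2-\phi^2_1$ must be expressed in terms of the coframe $(\xi,\bar\xi,\zeta,\bar\zeta)$, and I expect that the $CO(2)$-equivariance established in Theorem \ref{thm1} is exactly what guarantees that the dangerous components pair correctly with $\zeta$ rather than producing spurious $(0,2)$-terms. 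Once this bookkeeping is done, the equivalence is immediate, and I would also note that this is the computation referred to in the Remark after Proposition \ref{prop2}, where the flat-case vanishing of $T^{(0,2)}$ singles out the choice \eqref{defJ} of $\fJ^k$.
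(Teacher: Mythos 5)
Your reduction coincides with the paper's: integrability is equivalent to $d\xi^{k,j}\wedge\xi^{k,0}\wedge\cdots\wedge\xi^{k,\lfloor k/2\rfloor}\wedge\zeta=0$ and $d\zeta\wedge\xi^{k,0}\wedge\cdots\wedge\xi^{k,\lfloor k/2\rfloor}\wedge\zeta=0$; the terms of \eqref{eqtor2} and \eqref{eqcur2} containing a factor $\xi$ or $\zeta$ die; and everything hinges on showing that $\Lambda^1\wedge\bar\xi$ contributes no $(0,2)$-obstruction. But that is exactly the step you leave unproved, and the mechanism you propose for it --- that ``the $CO(2)$-equivariance established in Theorem \ref{thm1} is exactly what guarantees that the dangerous components pair correctly with $\zeta$'' --- cannot work. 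Equivariance only yields the block structure of the $\gl(2)$-action on $\V[k]^\C$: the $\mathfrak{co}(2)$-part of $\phi$ acts diagonally on the weight vectors $X_{k,j}=(x+\i y)^j(x-\i y)^{k-j}$, while the components of $\phi$ dual to $\zeta$ and $\bar\zeta$ act as shift operators moving $j$ by $\pm 1$. It does not tell you \emph{which} of $\zeta$, $\bar\zeta$ couples to which shift, and that pairing is the whole point: the proof needs that for $j$ in the antiholomorphic range $\{\lfloor k/2\rfloor+1,\ldots,k\}$ the only term leaving that range carries a $\zeta$ coefficient. Note also that your phrase ``absorbed into $\zeta$ and $\bar\zeta$'' concedes too much: a term $\bar\zeta\wedge\bar\xi^{k,l}$ is itself a $(0,2)$-form and would be a genuine obstruction, so you need the entries of $\Lambda^1$ to be multiples of $\zeta$ alone.

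That equivariance is insufficient is not a technicality. By Proposition \ref{prop1}, on each irreducible factor $\V[k,i]$ there are two $CO(2)$-commuting choices of complex structure ($\sqrt[i]{\fj}$ and its conjugate); flipping the choice on some factors produces other almost-complex structures enjoying all the equivariance used in Theorem \ref{thm1}, yet for those even the flat structure (where $\Theta=0$, $\Omega=0$, so $T^{(0,2)}=C^{(0,2)}=0$) fails to be integrable: terms of the form $\bar\zeta\wedge\bar\xi^{k,m+1}$ survive in the structure equations. This is precisely the content of the Remark following Proposition \ref{prop2}, which says that \eqref{defJ} is singled out only ``as it will follow from \eqref{eqtor3}''. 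The missing ingredient in your proposal is that computation: the paper takes $g(t)=\exp(t\phi)$, differentiates $X_{k,j}\cdot g(t)$ at $t=0$, and obtains
\[
X_{k,j}\cdot\Phi=j\bigl(-\i\zeta X_{k,j-1}+(\phi^1_1+\phi^2_2-\i(\phi^1_2-\phi^2_1))X_{k,j}\bigr)
+(k-j)\bigl(\i\bar\zeta X_{k,j+1}+(\phi^1_1+\phi^2_2+\i(\phi^1_2-\phi^2_1))X_{k,j}\bigr),
\]
from which the required statement --- every term either carries $\zeta$ or stays in the span of $X_{k,l}$, $l\in\{\lfloor k/2\rfloor+1,\ldots,k\}$ --- is immediate. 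Without this formula, or an equivalent explicit weight computation pinning down the $\zeta$/$\bar\zeta$ pairing against the holomorphic splitting, your argument does not close.
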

\begin{proof}
The integrability of the almost-complex structure is equivalent to the fact that
$$
d\zeta\wedge\xi^{k,0}\wedge\ldots\wedge\xi^{k,\lfloor\frac{k}{2}\rfloor}\wedge\zeta=0
$$
and
$$
d\xi^{k,j}\wedge\xi^{k,0}\wedge\ldots\wedge\xi^{k,\lfloor\frac{k}{2}\rfloor}\wedge\zeta=0
$$
for any $j=0,\ldots,\lfloor\frac{k}{2}\rfloor$.
Assume first that $T^{(0,2)}\neq 0$ or $C^{(0,2)}\neq 0$. Then there is a non-trivial component of the form $\bar\xi\wedge\bar\xi$ in \eqref{eqtor2} or in \eqref{eqcur2}. Consequently $d\xi^{k,j}\wedge\xi^{k,0}\wedge\ldots\wedge\xi^{k,\lfloor\frac{k}{2}\rfloor}\wedge\zeta\neq 0$ for some $j$ or $d\zeta\wedge\xi^{k,0}\wedge\ldots\wedge\xi^{k,\lfloor\frac{k}{2}\rfloor}\wedge\zeta\neq 0$. Therefore the almost-complex structure on $B(E)/CO(2)$ is not integrable.

Now, assume that $T^{(0,2)}=0$ and $C^{(0,2)}=0$. Then, clearly $d\zeta\wedge\xi^{k,0}\wedge\ldots\wedge\xi^{k,\lfloor\frac{k}{2}\rfloor}\wedge\zeta=0$ holds. In order to prove $d\xi^{k,j}\wedge\xi^{k,0}\wedge\ldots\wedge\xi^{k,\lfloor\frac{k}{2}\rfloor}\wedge\zeta=0$ it is sufficient to show that for any fixed $j$ the part of $d\xi^{k,j}$ involving $\Lambda^0$ and $\Lambda^1$ vanishes when multiplied by $\xi^{k,0}\wedge\ldots\wedge\xi^{k,\lfloor\frac{k}{2}\rfloor}\wedge\zeta$. Equivalently, it is sufficient to consider left representation $\Phi$ on $\V[k]^*$ and prove that any $\Phi\cdot\xi^{k,j}$, where $j=0,\ldots,\lfloor\frac{k}{2}\rfloor$, decomposes as a sum of components each involving either $\zeta$ or $\xi^{k,l}$ for some $l\in\{0,\ldots,\lfloor\frac{k}{2}\rfloor\}$. Passing to the dual representation, it is sufficient to prove that any $X_{k,j}\cdot\Phi$, where $j=\lfloor\frac{k}{2}\rfloor+1,\ldots,k$, decomposes as a sum of components each involving either $\zeta$ as a coefficient or $X_{k,l}$ for some $l\in\{\lfloor\frac{k}{2}\rfloor+1,\ldots,k\}$. Assume that $g(t)=\left(\begin{array}{cc} a(t) & b(t) \\ c(t) & d(t)\end{array}\right)$ is such that $g(t)=\exp(t\phi)$ and recall that $X_{k,l}$ is an eigenvector of $\fJ^k$ defined in \ref{defJ}. Then
$$
\begin{aligned}
&X_{k,j}\cdot g=\\
&(a(t)x+c(t)y+\i b(t)x+\i d(t)y)^j(a(t)x+c(t)y-\i b(t)x-\i d(t)y)^{k-j}.
\end{aligned}
$$
Moreover, since
$$
a'(0)=\psi^1_1,\quad d'(0)=\psi^2_2,
\quad 
b'(0)=\psi^2_1,\quad c'(0)=\psi^1_2,
$$
we get
$$
\begin{aligned}
&\left(a(t)x+c(t)y+\i b(t)x+\i d(t)y\right)'|_{t=0}=\\
&\qquad\qquad\qquad =\phi^1_1x+\phi^1_2y+\i\phi^2_1x+\i\phi^2_2y=\\
&\qquad\qquad\qquad =-\i\zeta(x+\i y)+(\phi^1_1+\phi^2_2+\i(\phi^1_2-\phi^2_1))(x-\i y).
\end{aligned}
$$
and
$$
\begin{aligned}
&\left(a(t)x+c(t)y-\i b(t)x-\i d(t)y\right)'|_{t=0}=\\
&\qquad\qquad\qquad =\phi^1_1x+\phi^1_2y-\i\phi^2_1x-\i\phi^2_2y=\\
&\qquad\qquad\qquad =\i\bar\zeta(x-\i y)+(\phi^1_1+\phi^2_2-\i(\phi^1_2-\phi^2_1))(x+\i y).
\end{aligned}
$$
Thus, differentiating $X_{k,j}\cdot g(t)$ with respect to $t$, at $t=0$, we obtain
\begin{equation}\label{eqtor3}
\begin{aligned}
X_{k,j}\cdot\Phi&=
j\left(-\i\zeta X_{k,j-1}+(\phi^1_1+\phi^2_2-\i(\phi^1_2-\phi^2_1))X_{k,j}\right)+\\
&+(k-j)\left(\i\bar\zeta X_{k,j+1}+(\phi^1_1+\phi^2_2+\i(\phi^1_2-\phi^2_1))X_{k,j}\right).
\end{aligned}
\end{equation}
Hence, $X_{k,j}\cdot\Phi$ decomposes as a sum of components as claimed.
\end{proof}

\begin{proposition}\label{prop3}
If $T^{(1,1)}=0$ and $T^{(0,2)}=0$ then $C^{(0,2)}=0$.
\end{proposition}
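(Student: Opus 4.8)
The plan is to differentiate the structure equations and to read off the vanishing of $C^{(0,2)}$ from the resulting Bianchi identities. Since both $\Theta$ and $\Omega$ are horizontal, everything can be expanded in the coframe $(\xi,\bar\xi,\zeta,\bar\zeta,\eta,\mu)$ of $B(E)$, where $\eta=\phi^1_1+\phi^2_2$ and $\mu=\phi^1_2-\phi^2_1$ are the two $CO(2)$-connection forms. The action of the connection on this coframe is governed by \eqref{eqtor3}, which is tridiagonal in the eigenbasis $X_{k,j}$, with $\zeta$ lowering and $\bar\zeta$ raising the index $j$. Writing the soldering form in this basis as $\omega=\tfrac12\sum_j(\xi^{k,j}X_{k,j}+\bar\xi^{k,j}\bar X_{k,j})$ singles out the $\i$-eigenbundle $\spn\{X_{k,j}:j\le\lfloor k/2\rfloor\}$. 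The hypotheses $T^{(1,1)}=T^{(0,2)}=0$ say precisely that the components $\Theta^{j}$ of the torsion along $X_{k,j}$ with $j\le\lfloor k/2\rfloor$ are of pure type $(2,0)$, and by reality that those with $j>\lfloor k/2\rfloor$ are of pure type $(0,2)$.

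First I would use the first Bianchi identity obtained by applying $d$ to \eqref{eqtor1}: using \eqref{eqcur1} and the linearity of the representation one gets $d\Theta+\Phi\wedge\Theta=\Omega_\rho\wedge\omega$, where $\Omega_\rho$ is the curvature $\Omega$ fed through the representation exactly as $\Phi$ is obtained from $\phi$. I would project this $\V[k]$-valued identity onto the $\i$-eigenspace and extract its purely anti-holomorphic part. On the left, $d\Theta$ has no such part because every $\Theta^{j}$ with $j\le\lfloor k/2\rfloor$ is of type $(2,0)$ (this is where the hypotheses enter: differentiating $\xi^{k,a}\wedge\xi^{k,b}$ always retains a holomorphic factor), while the only way $\Phi\wedge\Theta$ can reach the $\i$-eigenspace from the $(0,2)$-components is through the $\zeta$-lowering term of \eqref{eqtor3}, which carries a $\zeta$ and is therefore not anti-holomorphic. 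On the right, the analogue of \eqref{eqtor3} for $\Omega_\rho$ together with the expansion of $\omega$ produces exactly a term $C^{(0,2)}\wedge\bar\xi^{k,\lfloor k/2\rfloor}$. Hence this component of the identity forces $C^{(0,2)}\wedge\bar\xi^{k,\lfloor k/2\rfloor}=0$, which already annihilates every component of $C^{(0,2)}$ not involving the top index.

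To kill the remaining components, all of which involve the top index, I would pass to the curvature identity $d(d\zeta)=0$ coming from \eqref{eqcur2} and combine it with the relation just obtained. The essential point is that the fibre-derivative terms occurring here are not new data: the coefficients $C^{(0,2)}$ (and the torsion coefficients) are $CO(2)$-equivariant of definite weight, computable from the transformation laws of $\xi^{k,j}$ and $\zeta$ recorded in the proof of Theorem \ref{thm1}, so that differentiation along the vertical $\mu$-direction returns an algebraic multiple of the coefficient. Collecting the $\eta,\mu$-contributions of $d\bar\xi^{k,a}$, the connection term $\tfrac12\i\,\zeta\wedge\mu$ of \eqref{eqcur2}, and this weight, one is left with a scalar multiple of each surviving component of $C^{(0,2)}$; the hypotheses again guarantee that the only possible torsion contributions (which could enter through the mixed term $C^{(1,1)}\xi\wedge\bar\xi$) are absent, so the relation collapses to $(\mathrm{const})\,C^{(0,2)}=0$.

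The main obstacle is arithmetic rather than conceptual: one must verify that the scalar multiplying each component of $C^{(0,2)}$ is nonzero for every odd $k$ and every admissible pair of indices, a weight computation that depends delicately on the normalisations fixed in \eqref{eqtor3} and \eqref{eqcur2}. A secondary difficulty is the bookkeeping forced by the forms $\eta$ and $\mu$: they are not part of the $(1,0)$/$(0,1)$-splitting underlying $\fJ_\phi$ and must be handled through equivariance rather than as honest coframe directions, and one has to ensure that contributions of $C^{(2,0)}$ and $C^{(1,1)}$, as well as of the curvature combinations $\Omega^1_1+\Omega^2_2$ and $\Omega^1_2-\Omega^2_1$, do not leak into the components used above.
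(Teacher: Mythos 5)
Your first step is sound and close in spirit to the paper's own argument (both are consequences of $d^2=0$ applied to the soldering structure equation \eqref{eqtor1}), and you correctly identify its limitation: the purely anti-holomorphic $(0,3)$-part of the first Bianchi identity, projected to the $\i$-eigenspace, only yields $C^{(0,2)}\wedge\bar\xi^{k,\lfloor k/2\rfloor}=0$, i.e.\ the vanishing of the components of $C^{(0,2)}$ not involving the top index. The genuine gap is in your second step. The components of $d(d\zeta)=0$ you propose to use --- those with one vertical leg along $\mu=\phi^1_2-\phi^2_1$ (or $\eta=\phi^1_1+\phi^2_2$) and two anti-holomorphic horizontal legs --- receive contributions only from $\tfrac12\i\,d\zeta\wedge\mu$, from the fibre derivative of $C^{(0,2)}$ along $\mu$, and from the $\mu$-parts of the diagonal connection terms in $d\bar\xi^{k,a}$; no torsion coefficient ever enters them, because in \eqref{eqtor3} the couplings between holomorphic and anti-holomorphic coframe forms go through $\zeta$ and $\bar\zeta$, never through $\mu$ or $\eta$. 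Consequently this component of $d^2\zeta=0$ is an identity valid for \emph{every} $GL(2)$-connection on \emph{every} $GL(2)$-structure, with or without your hypotheses: it is exactly the infinitesimal $CO(2)$-equivariance of the coefficients $C^{(0,2)}$, which is automatic since curvature is tensorial. Once you substitute that equivariance to evaluate the fibre derivative --- as your plan requires --- the relation collapses to $0=0$. The scalar whose non-vanishing you defer as ``arithmetic'' is in fact identically zero, so the second step cannot produce $C^{(0,2)}=0$.

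The repair is to stay with purely horizontal components but pick one in which $C^{(0,2)}$ appears wedged with a \emph{holomorphic} rather than an anti-holomorphic coframe form. This is what the paper does: under the hypotheses, \eqref{eqtor3} gives $d\xi^{k,0}=k\alpha\wedge\xi^{k,0}-\i\zeta\wedge\xi^{k,1}$ mod $\xi\wedge\xi$, with $\alpha=\phi^1_1+\phi^2_2+\i(\phi^1_2-\phi^2_1)$; applying $d$ and collecting all terms of the form $\bar\xi\wedge\bar\xi\wedge\xi$, the only one is $\i\,C^{(0,2)}\bar\xi\wedge\bar\xi\wedge\xi^{k,1}$ coming from $d\zeta\wedge\xi^{k,1}$, since every other term carries either a vertical factor or two holomorphic factors, and $d\alpha\wedge\xi^{k,0}$ produces only terms ending in $\xi^{k,0}$. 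Because the $3$-forms $\bar\xi^{k,a}\wedge\bar\xi^{k,b}\wedge\xi^{k,1}$ are linearly independent, \emph{all} components of $C^{(0,2)}$ vanish at once, with no second step and no weight computation. In your language, this is the $(1,2)$-part (not the $(0,3)$-part) of the $\i$-eigenspace projection of the first Bianchi identity, where the $\zeta$-lowering term of \eqref{eqtor3} couples $C^{(0,2)}$ to the holomorphic form $\xi^{k,j+1}$.
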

\begin{proof}
Assume $T^{(1,1)}=0$ and $T^{(0,2)}=0$. Then \eqref{eqtor3} implies that
$$
d\xi^{k,0}=k\alpha\wedge\xi^{k,0}-\i\zeta\wedge\xi^{k,1}\mod\  \xi\wedge\xi,
$$
where
$$
\alpha=\phi^1_1+\phi^2_2+\i(\phi^1_2-\phi^2_1).
$$
Thus
$$
\i d(\zeta\wedge\xi^{k,1})=kd(\alpha\wedge\xi^{k,0})\mod\ d\xi\wedge\xi.
$$
Expanding the both sides of the above equation we see that the only term on the left hand side involving terms of the form  $\bar\xi\wedge\bar\xi\wedge\xi$ comes from $d\zeta\wedge\xi^{k,1}$ and is equal 
$$
\i C^{(0,2)}\bar\xi\wedge\bar\xi\wedge\xi^{k,1}.
$$
On the other hand, the only term on the right hand side involving $\bar\xi\wedge\bar\xi\wedge\xi$ comes from $d\alpha\wedge\xi^{k,0}$, because, due to the assumption, there are no terms of the form $\bar\xi\wedge\xi$ or $\bar\xi\wedge\bar\xi$ in the torsion.  Therefore, the right hand side does not contain any term of the form $\bar\xi\wedge\bar\xi\wedge\xi^{k,1}$. It follows that $C^{(0,2)}=0$. 
\end{proof}

\section{Canonical connections}\label{secCon}

In this section we construct canonical connections for all $GL(2)$-structures in dimension 4 or greater. As a consequence, applying results of Section \ref{secGL2} to the canonical connections, we are able to assign a canonical almost-complex structure to any $GL(2)$-structure. 

Let $M$ be a manifold of dimension $k+1$,  not necessarily even, and let $TM\simeq S^k(E)$ be a $GL(2)$-structure on $M$. As before $\omega$ is the canonical $\V[k]$-valued soldering form and $\phi$ is a $\gl(2)$-valued connection form defined on $B(E)$. Recall that the torsion 2-form $\Theta$ is defined by equation \eqref{eqtor1}. For each $\chi\in B(E)$ we have $\Theta_\chi\in(\V[k]^*\wedge\V[k]^*)\otimes\V[k]$.

Recall that $\fg_k$ is a subalgebra of $\V[k]^*\otimes\V[k]$, isomorphic to $\gl(2)$, corresponding to the standard representation of $GL(2)$. Using $\fg_k$ we define the following subspace of $\V[k]^*\otimes\V[k]$
$$
\fg_k^\perp=\{\psi\in \V[k]^*\otimes\V[k]\ |\ \tr(\eta\circ\psi)=0\quad \forall \eta\in\fg_k\}.
$$
A matrix $\psi=(\psi^i_j)_{i,j=0,\ldots,k}$ is in $\fg_k^\perp$ if and only if
\begin{equation}\label{eqnorm}
\begin{aligned}
&\sum_{i=0}^k\psi^i_i=0,\quad\sum_{i=0}^k(k-2i)\psi^i_i=0,\\
&\sum_{i=0}^{k-1}(k-i)\psi^{i+1}_i=0,\quad\sum_{i=1}^ki\psi^{i-1}_i=0.
\end{aligned}
\end{equation}
The condition $\tr(\eta\circ\psi)=0$ is invariant with respect to the action of $GL(2)$ on $\V[k]^*\otimes\V[k]$\footnote{Both $\eta$ and $\psi$ can be considered as $(k+1)\times(k+1)$ matrices. Then $\tr(\eta\circ\psi)=\tr(\eta\psi)$. Note that this is different than the scalar product of matrices $\tr(\eta^t\psi)$}. Thus it can be used as a condition defining a connection. 

\begin{theorem}\label{thmnorm}
Let $TM\simeq S^k(E)$ be a $GL(2)$-structure on a manifold $M$ of dimension $k+1>3$. There is a unique $GL(2)$-connection $\phi=(\phi^i_j)_{i,j=1,2}$ with values in $\gl(2)$ such that $\Theta_\chi(X,.)\in\fg_k^\perp$ for any $\chi\in B(E)$ and $X\in T_\chi B(E)$.
\end{theorem}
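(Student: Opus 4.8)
The plan is to recast the statement as a pointwise problem in linear algebra governed by the Spencer operator of the $GL(2)$-structure, and then to resolve that problem by decomposing the relevant spaces into $GL(2)$-irreducibles.

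First I would fix an arbitrary $GL(2)$-connection $\phi_0$ on $B(E)$ (one exists by a standard partition-of-unity argument) and write its torsion as $\Theta_0$. Any other $GL(2)$-connection is of the form $\phi=\phi_0+\gamma$ with $\gamma$ a horizontal $\gl(2)$-valued $1$-form; under the representation this replaces $\Phi_0$ by $\Phi_0+\Gamma$ with $\Gamma$ a horizontal $\fg_k$-valued $1$-form. Using $\omega$ to identify horizontal covectors with $\V[k]^*$, the value $\Gamma_\chi$ becomes an element $\tau\in\V[k]^*\otimes\fg_k$, and the structure equation \eqref{eqtor1} shows that the torsion changes by the \emph{Spencer operator}
\[
\tilde\Theta_\chi=\Theta_\chi+\partial\tau,\qquad (\partial\tau)(u,v)=\tau(u)v-\tau(v)u,
\]
where $\tau(u)\in\fg_k\subset\mathrm{End}(\V[k])$ acts on $v\in\V[k]$. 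Since $\Theta$ is horizontal, the condition $\Theta_\chi(X,\cdot)\in\fg_k^\perp$ involves only $\omega(X)\in\V[k]$, so it reads $P\big(\Theta_\chi(u,\cdot)\big)=0$ for all $u$, where $P\colon\mathrm{End}(\V[k])\to\mathrm{End}(\V[k])/\fg_k^\perp$ is the quotient projection. Because the trace pairing $\langle\eta,\psi\rangle=\tr(\eta\psi)$ is nondegenerate on $\fg_k$ and identifies $\mathrm{End}(\V[k])/\fg_k^\perp$ with $\fg_k^*$, writing $\iota$ for the tautological inclusion $\Lambda^2\V[k]^*\otimes\V[k]\hookrightarrow\V[k]^*\otimes\mathrm{End}(\V[k])$, $\iota(\Theta)(u)=\Theta(u,\cdot)$, the whole theorem reduces to the purely algebraic claim that the $GL(2)$-equivariant map
\[
\Psi\colon\V[k]^*\otimes\fg_k\longrightarrow\V[k]^*\otimes\fg_k^*,\qquad \Psi=(\mathrm{id}\otimes P)\circ\iota\circ\partial,
\]
is a bijection: one must solve $\Psi(\tau)=-(\mathrm{id}\otimes P)\iota(\Theta_0)$ for $\tau$, and bijectivity gives existence and uniqueness. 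Both spaces have dimension $4(k+1)$, so it suffices to prove injectivity; moreover $\ker\partial\subseteq\ker\Psi$, so this simultaneously yields the vanishing of the first prolongation of $\fg_k$.

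Next I would analyse $\Psi$ by representation theory. The trace form identifies $\fg_k\cong\fg_k^*$ as $GL(2)$-modules, so $\Psi$ is an equivariant endomorphism of $\V[k]^*\otimes\fg_k$. Writing $\fg_k=\mathfrak{sl}(2)\oplus\R\,\mathrm{Id}$ and using the Clebsch--Gordan rule (valid for $k\geq2$), one gets the decomposition into $SL(2)$-irreducibles
\[
\V[k]^*\otimes\fg_k\cong \V[k+2]\oplus \V[k]\oplus \V[k]\oplus \V[k-2],
\]
where $\V[k]$ occurs with multiplicity two (one copy from $\V[k]^*\otimes\mathfrak{sl}(2)$, one from the centre $\R\,\mathrm{Id}$). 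By Schur's lemma $\Psi$ preserves each isotypic component, acting as a scalar on the $\V[k+2]$- and $\V[k-2]$-parts and as a real $2\times2$ matrix on the $\V[k]$-isotypic part. Hence injectivity of $\Psi$ is equivalent to the nonvanishing of those two scalars together with the invertibility of the $2\times2$ block.

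The hard part is this last verification. I would pin down the three blocks by evaluating $\Psi$ on explicit highest-weight vectors of each component (built from the polynomials $X_{k,j}$ and a basis of $\fg_k$), reducing everything to a handful of scalar identities in $k$. I expect the two scalars and the determinant of the $2\times2$ block to be nonzero precisely when $k\geq3$; the degeneration at $k=2$ is exactly the nonvanishing first prolongation of $3$-dimensional conformal structures, which matches the hypothesis $\dim M=k+1>3$. Once injectivity is established, $\Psi$ is a fixed equivariant isomorphism, so the pointwise solution $\tau=-\Psi^{-1}(\mathrm{id}\otimes P)\iota(\Theta_0)$ depends smoothly and $GL(2)$-equivariantly on $\Theta_0$; therefore $\phi=\phi_0+\gamma$ is a well-defined smooth $GL(2)$-connection, and it is the unique one whose torsion satisfies $\Theta_\chi(X,\cdot)\in\fg_k^\perp$.
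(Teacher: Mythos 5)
Your reduction is sound and, up to the last step, coincides with the paper's own strategy: the paper also fixes an arbitrary $GL(2)$-connection, observes that under $\phi\mapsto\phi+\psi$ the torsion changes by the Spencer-type expression $\Theta(X,Y)\mapsto\Theta(X,Y)+\Psi(Y)X-\Psi(X)Y$, and reduces the theorem to the injectivity of the resulting pointwise linear map between spaces of equal dimension $4(k+1)$ (phrased there as: the conditions $\Psi X_i-\Psi(X_i)\in\fg_k^\perp$, $i=0,\ldots,k$, force $\psi=0$). Where you diverge is in how injectivity is to be established --- and that is exactly where your proposal stops being a proof. You write that you \emph{expect} the two Schur scalars and the determinant of the $2\times 2$ block on the $\V[k]$-isotypic component to be nonzero precisely when $k\geq 3$. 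That nondegeneracy is the entire content of the theorem: it is the one place where the hypothesis $\dim M=k+1>3$ must enter, and it genuinely fails at $k=2$ (the paper notes that a $3$-dimensional $GL(2)$-structure admits a $3$-parameter family of torsion-free connections, reflecting the nonvanishing first prolongation in the conformal case). An unverified expectation at the unique point where the statement could fail is a genuine gap, not a routine omission.

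For comparison, the paper closes this gap by direct computation: writing $\psi^l_j(X_i)=\psi^l_{ji}$ in the monomial basis, it applies the four trace conditions \eqref{eqnorm} to each matrix $\Psi X_i-\Psi(X_i)$ and observes that the resulting $4(k+1)$ equations decouple into $k+1$ subsystems of four equations in the four unknowns $\psi^1_{1i},\psi^2_{2i},\psi^2_{1i-1},\psi^1_{2i+1}$, each subsystem having the same determinant $\frac{1}{216}(k-2)(k-1)^2k^3(k+2)(k+3)(k+4)(k^2+k+6)$, which is nonzero exactly for $k>2$. Your representation-theoretic frame (Clebsch--Gordan giving $\V[k+2]\oplus\V[k]\oplus\V[k]\oplus\V[k-2]$, Schur's lemma reducing injectivity to two scalars and one $2\times2$ determinant) is correct as stated and is arguably a cleaner organization --- it explains a priori why the paper's determinants come out independent of $i$ and factor so uniformly in $k$. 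But to complete it you must actually evaluate your equivariant map on a highest-weight vector of each summand and exhibit the three quantities as explicit nonvanishing functions of $k$; this is the analogue of, and no lighter than, the paper's determinant computation. Until that is done, the proof is incomplete.
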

\begin{proof}
Let $\phi=(\phi^i_j)_{i,j=1,2}$ be any $GL(2)$-connection on $M$. We shall prove that $\phi$ can be uniquely modified $\phi\mapsto \phi+\psi$ by a horizontal 1-form $\psi=(\psi^i_j)_{i,j=1,2}$  such that the normalization condition is satisfied. Recall that the torsion transforms as 
$$
\Theta(X,Y)\mapsto \Theta(X,Y)+\Psi(Y)X-\Psi(X)Y,
$$
where $\Psi$ is a 1-form with values in $\fg_k$ corresponding to $\psi$, whereas $\Psi(Y)X$ and $\Psi(X)Y$ denote multiplication of $\Psi(Y)$ and $\Psi(X)$, considered as elements of $\fg_k$, by column vectors $X$ or $Y$, respectively. Let $X_i=x^iy^{k-i}$ be an element of $\V[k]$. Denote $\psi^l_j(X_i)=\psi^l_{ji}$. We shall consider the linear mapping $Y\mapsto \Psi(Y)X_i-\Psi(X_i)Y$ whose matrix takes the following form
$$
\begin{aligned}
&\Psi X_i-\Psi(X_i)=\\
&=\left(\begin{array}{ccccc}
0 & 0 & \cdots & 0\\
\vdots & \vdots & \ddots & \vdots\\
0 & 0 & \cdots & 0\\
i\psi^2_{10} & i\psi^2_{11} & \cdots & i\psi^2_{1k}\\
(k-i)\psi^1_{10}+i\psi^2_{20} & (k-i)\psi^1_{11}+i\psi^2_{21} & \cdots & (k-i)\psi^1_{1k}+i\psi^2_{2k}\\
(k-i)\psi^1_{20} & (k-i)\psi^1_{21} & \cdots & (k-i)\psi^1_{2k}\\
0 & 0 & \cdots & 0\\
\vdots & \vdots & \ddots & \vdots\\
0 & 0 & \cdots & 0\\
\end{array}\right)\\
&\qquad-\left(\begin{array}{ccccc}
k\psi^1_{1i} & \psi^2_{1i} & 0 &\cdots &0\\
k\psi^1_{2i} & (k-1)\psi^1_{1i}+\psi^2_{2i} & 2\psi^2_{1i} & \cdots& 0 \\ 
0 & (k-1)\psi^1_{2i} & (k-2)\psi^1_{1i}+2\psi^2_{2i} & \cdots & 0 \\ 
\vdots & \vdots & \vdots & \ddots & \vdots \\
0 & 0 & 0 & \cdots & k\psi^2_{2i}
\end{array}\right).
\end{aligned}
$$
Now, it is enough to prove that the conditions $\Psi X_i-\Psi(X_i)\in\fg_k^\perp$, considered jointly for all $i=0,\ldots,k$, have unique solution $\psi^l_{ji}=0$. Applying \eqref{eqnorm} to the matrices $\Psi X_i-\Psi(X_i)$, we get a system of $4(k+1)$ linear equations for unknown $\psi^l_{ji}$. Note that the equations can be grouped into $k+1$ subsystems, of 4 equations each, for variables $\psi^1_{1i},\psi^2_{2i}, \psi^2_{1i-1}, \psi^1_{2i+1}$. Indeed, we have the following
$$
\begin{aligned}
&\left(\binom{k+1}{2}-(k-i)\right)\psi^1_{1i}+\left(\binom{k+1}{2}-i\right)\psi^2_{2i}\\
&\qquad -i\psi^2_{1i-1} -(k-i)\psi^1_{2i+1}=0\\
&\left(\binom{k+2}{6}-(k-2i)(k-i)\right)\psi^1_{1i} -\left(\binom{k+2}{6}+(k-2i)i\right)\psi^2_{2i} \\ &\qquad-(k-2(i-1))i\psi^2_{1i+1}-(k-2(i+1))(k-i)\psi^1_{2i+1}=0
\end{aligned}
$$
$$
\begin{aligned}
&-(i+1)(k-(i+1))\psi^1_{1i} -(i+1)^2\psi^2_{2i} \\
&\qquad-i(i+1)\psi^2_{1i+1}+\left(\binom{k+2}{3}-(i+2)(k-(i+1))\right)\psi^1_{2i+1}=0\\
&-(k-(i-1))^2\psi^1_{1i} -(k-(i-1))(i-1)\psi^2_{2i} \\
&\qquad+\left(\binom{k+2}{3}-(k-(i-2))(i-1)\right)\psi^2_{1i+1}-(k-i)(k-(i-1))\psi^1_{2i+1}=0
\end{aligned}
$$
where $i=0,\ldots,k$. Surprisingly, the determinants of the above systems are independent of $i$ and equal
$$
\frac{1}{216}(k-2)(k-1)^2k^3(k+2)(k+3)(k+4)(k^2+k+6).
$$
Thus, they are non-degenerate, provided $k>2$. 
\end{proof}

The unique connection $\phi$ satisfying the condition of Theorem \ref{thmnorm} as well as the corresponding almost complex structure $\fJ_\phi$ will be referred to as the canonical connection, and the canonical almost-complex structure, respectively. 

\begin{examples}
If $k=2$ then the torsion of a $GL(2)$-connection has 9 components, whereas the connection itself depends on 12 arbitrary functions. Thus, there is a 3-parameter family of torsion-free connections associated to a $GL(2)$-structure which clearly cannot be further normalized in terms of the torsion. However, the geometry of 3-dimensional $GL(2)$-structures is the geometry of conformal metrics and we refer to \cite{K} for a construction of a canonical Cartan connection in this case.

If $k=3$ then the normalization condition of Theorem \ref{thmnorm} coincides with the condition of Bryant \cite{B} (see also \cite{N} for explicit formulae). Indeed, the Bryant connection is the unique $GL(2)$-connection with $\V[3]$-valued torsion of the form $\Theta=\langle\tau,\langle\omega,\omega\rangle_1\rangle_4$ in the notation of \cite{B}, where $\tau$ is a homogeneous polynomial of order 7 of variables $(x,y)$. This can be expanded to the following formula
$$
\begin{aligned}
&\Theta=x^3(\tau_0 2520\omega_2\wedge\omega_3-\tau_1 720\omega_1\wedge\omega_3+\tau_2 120(3\omega_0\wedge\omega_3+\omega_1\wedge\omega_2)\\
&\quad-\tau_3 144\omega_0\wedge\omega_2 +\tau_4 72\omega_0\wedge\omega_1)+\\
&\quad x^2y(\tau_1 1080\omega_2\wedge\omega_3-\tau_2 720\omega_1\wedge\omega_3+\tau_3 216(3\omega_0\wedge\omega_3+\omega_1\wedge\omega_2)\\
&\quad-\tau_4 432\omega_0\wedge\omega_2 +\tau_5 360\omega_0\wedge\omega_1)+\\
&\quad xy^2(\tau_2 360\omega_2\wedge\omega_3-\tau_3 432\omega_1\wedge\omega_3+\tau_4 216(3\omega_0\wedge\omega_3+\omega_1\wedge\omega_2)\\
&\quad-\tau_5 720\omega_0\wedge\omega_2 +\tau_6 1080\omega_0\wedge\omega_1)+\\
&\quad y^3(\tau_3 72\omega_2\wedge\omega_3-\tau_4 144\omega_1\wedge\omega_3+\tau_5 120(3\omega_0\wedge\omega_3+\omega_1\wedge\omega_2)\\
&\quad-\tau_6 720\omega_0\wedge\omega_2 +\tau_7 2520\omega_0\wedge\omega_1),
\end{aligned}
$$
where $(\tau_0,\ldots,\tau_7)$ are certain functions. A matrix $(\psi^i_j)_{i,j=0,\ldots,3}$ belongs to $\fg_k^\perp$ if and only if \eqref{eqnorm} is satisfied with $k=3$, i.e.
$$
\begin{aligned}
&\psi^0_0+\psi^1_1+\psi^2_2+\psi^3_3=3\psi^0_0+\psi^1_1-\psi^2_2-3\psi^3_3=0\\
&\psi^0_1+2\psi^1_2+3\psi^2_3=3\psi^1_0+2\psi^2_1+\psi^3_2=0.
\end{aligned}
$$
It can be directly verified that in the Bryant's case $\Theta_\chi(X,.)\in\fg_k^\perp$ for any vector $X\in T_\chi B(E)$. Note that one of the normalization conditions is $\tr\Theta_\chi(X,.)=0$. This, in dimension 4, can be equivalently expressed as $T^{(2,0)}=0$.
\end{examples}

\section{Holomorphic sections and twistor theory}\label{secHol}
In this section we shall exploit our previous constructions in the case of half-integrable $GL(2)$-structures. In particular we shall concentrate on 4-dimensional manifolds. In this case the half-integrability is equivalent to 3-integrability as explained in Section \ref{secPre} (Proposition \ref{propInt}). We strat with a different viewpoint on the quotient bundle $B(E)/CO(2)$.

Let us assume that $M$ is a manifold of even dimension $k+1$ with a $GL(2)$-structure $TM\simeq S^k(E)$. Let $E^\C$ denotes the complexification of $E$ and let $P(E^\C)$ be the complex projectivization of $E^\C$. Thus, for each $p\in M$ we have $P(E^\C_p)\simeq\C P^1$. Recall that there is a natural embedding of $P(E)$ into $P(E^\C)$. Therefore we shall consider $P(E)$ as a subset of $P(E^\C)$. Note that $P(E^\C_p)\setminus P(E_p)$ is a sum of two disjoint discs.
\begin{proposition}\label{propIdentif}
There is a canonical identification
$$
f_E\colon B(E)/CO(2)\to P(E^\C) \setminus P(E).
$$
\end{proposition}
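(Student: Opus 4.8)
The plan is to make $B(E)$ concrete as the bundle of ordered bases of $E$ and to define $f_E$ by sending a basis to the single non-real complex line it determines. Recall that a point $\chi\in B(E)$ over $p$ is by construction a frame of the special form $(x^k,x^{k-1}y,\ldots,y^k)$, hence is nothing but the datum of an ordered basis $(x,y)$ of $E_p$; since distinct bases yield distinct frames, $B(E)$ is canonically the frame bundle of the rank-2 bundle $E$. I would then set
$$
f_E\colon\ [(x,y)]\longmapsto [\,x-\i y\,]\in P(E_p^\C),
$$
where $[\,\cdot\,]$ denotes the $CO(2)$-orbit on the left and the complex projective class on the right. The conceptual content is that a basis modulo $CO(2)$ is the same as a complex structure $J$ on $E_p$ (namely $Jx=y$), and $[x-\i y]$ is its $\i$-eigenline; but the direct computation below is all that is needed.

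First I would check that the map descends to the quotient, i.e. is $CO(2)$-invariant. Writing $g=\left(\begin{array}{cc} u & -v\\ v & u\end{array}\right)\in CO(2)$ and using the right action $(x,y)\mapsto(x,y)g=(ux+vy,\,-vx+uy)$, a one-line computation gives
$$
(ux+vy)-\i(-vx+uy)=(u+\i v)(x-\i y),
$$
so $x-\i y$ is merely rescaled by the nonzero complex number $u+\i v$ and its projective class is unchanged. Next I would verify that the image avoids $P(E)$: because $(x,y)$ is a basis, $x$ and $y$ are independent, so $x-\i y$ is not proportional to any real vector and $[x-\i y]\notin P(E_p)$.

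It then remains to prove $f_E$ is a bijection onto all of $P(E^\C)\setminus P(E)$. For surjectivity, given $[w]\in P(E_p^\C)\setminus P(E_p)$ I would write $w=a+\i b$ with $a,b\in E_p$ its real and imaginary parts; the hypothesis $[w]\notin P(E_p)$ is precisely the statement that $a,b$ are independent, whence $(a,-b)$ is a basis and $f_E([(a,-b)])=[a+\i b]=[w]$. For injectivity, if $[x-\i y]=[x'-\i y']$ then $x'-\i y'=(u+\i v)(x-\i y)$ for some $u+\i v\in\C^*$, and comparing real and imaginary parts recovers exactly $(x',y')=(x,y)g$ with $g\in CO(2)$, so the two bases lie in one orbit. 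Since $f_E$ is smooth and its inverse $[w]\mapsto[(\mathrm{Re}\,w,-\mathrm{Im}\,w)]$ is smooth as well (the ambiguity in rescaling $w$ is absorbed by the $CO(2)$-action, as the same computation shows), this gives the identification covering the identity on $M$.

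I do not foresee a genuine obstacle: the whole proof rests on the elementary but pivotal fact that $CO(2)$ acts on $x-\i y$ by scalar multiplication by $u+\i v$, which simultaneously makes $f_E$ well defined and identifies the fibers of the quotient with single complex scalings. The only point deserving care is confirming that $f_E$ surjects onto \emph{both} connected components of $P(E^\C)\setminus P(E)$ rather than one. This holds because $B(E)$ contains bases of both orientations while $CO(2)$, having determinant $u^2+v^2>0$, is orientation-preserving; the two orientation classes therefore map bijectively onto the two discs $[x-\i y]$ and $[x+\i y]=[x-\i(-y)]$, matching the stated decomposition of $P(E^\C_p)\setminus P(E_p)$ into two disjoint discs.
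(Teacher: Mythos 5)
Your proof is correct and follows essentially the same route as the paper: identify a point of $B(E)$ with a basis $(x,y)$ of $E_p$, send it to the complex line $[x\mp\i y]$, observe that $CO(2)$ acts by multiplication by the nonzero scalar $u\pm\i v$ so the map descends and is fiberwise a bijection onto the non-real lines. Your version is somewhat more detailed than the paper's (which compresses injectivity and surjectivity into ``the reasoning can be inverted''), and the remark about the two connected components is a nice extra check, but it is the same argument.
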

\begin{proof}
Fix a point $\chi\in B(E)_p$ . It is a frame in $T_pM$ of the form
$$
 \chi=(x^k,x^{k-1}y,\ldots,xy^{k-1},x^k)
$$
uniquely defined by a basis $(x,y)$ of $E_p$. Then
$$
z=x+\i y
$$
is a point in $E^\C$. The action of $g= \left(\begin{array}{cc} u & -v\\ v & u\end{array}\right)\in CO(2)$ on $B(E)$ yields new point in $\tilde z\in E^\C$, where $\tilde z = (u+\i v)z$. Thus, we have shown that there is a natural map from $B(E)/CO(2)$ to $P(E^\C)$, $f_E([\chi])=[z]$. Since $(x,y)$ is a basis of $E_p$ we get that the map takes values in $P(E^\C) \setminus P(E)$, because $P(E^\C) \setminus P(E)$ consist exactly of classes $[x+\i y]$ such that  $x$ and $y$ are not proportional (recall that $x$ and $y$ are vectors in $T_pM$). The reasoning can be inverted and it shows that any point $[x+\i y]\in P(E^\C)\setminus P(E)$ defines a basis in $E$ up to the action of $CO(2)$. Thus $f_E$ is invertible.
\end{proof}

Now, a point in $P(E^\C_p) \setminus P(E_p)$ can be identified with a complex structure in $T_pM$ as follows.

A complex structure on $T_pM$ is uniquely defined by a half-dimensional subspace in $T^\C_pM$ being, by definition, the $\i$-eigenspace. If a $GL(2)$-structure $TM\simeq S^k(E)$ is present then $T^\C_pM\simeq S^k(E^\C_p)$. Fixing a basis $(x,y)$ in $E_p$ and denoting $z=x+\i y$ and $\bar z=x-\i y$ one can consider a subspace spanned by
$$
z^k, z^{k-1}\bar z,\ldots, z^{\frac{k+1}{2}}\bar z^{\frac{k-1}{2}}
$$
and declare that this subspace defines a complex structure $\fJ_z$ on $TM$. The structure depends on the class $[z]\in P(E^\C_p)\setminus P(E_p)$ only. Hence, points in $P(E^\C_p)\setminus P(E_p)$ and consequently, due to Proposition \ref{propIdentif}, points in $B(E)_p/CO(2)$ can be considered as complex structures on $T_pM$. 

\begin{proposition}\label{propComplIdent}
For any basis $(x,y)$ of $E_p$ the complex structure $\fJ_z$ on $T_pM$, where $z=x+\i y$, coincides with $\fJ^k$ on $\V[k]$ under the identification $T_pM\simeq\V[k]$ defined by the basis $(x,y)$.
\end{proposition}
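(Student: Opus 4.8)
The plan is to observe that the statement is purely pointwise linear algebra on the single real vector space $T_pM\simeq\V[k]$ (the identification being the one furnished by the chosen basis $(x,y)$ of $E_p$), so that no differential or connection-theoretic input is required. A complex structure on a real vector space is completely determined by its $\i$-eigenspace inside the complexification, and two complex structures coincide precisely when these eigenspaces coincide. Hence it suffices to compare the $\i$-eigenspace of $\fJ_z$ with that of $\fJ^k$ inside $T_p^\C M\simeq S^k(E_p^\C)\simeq\V[k]\otimes\C$.

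First I would record the two eigenspaces explicitly as spans of monomials in $z=x+\i y$ and $\bar z=x-\i y$, viewed as elements of $E_p^\C$. By the very definition of $\fJ_z$ given just above the proposition, its $\i$-eigenspace is $\spn\{z^k,z^{k-1}\bar z,\ldots,z^{(k+1)/2}\bar z^{(k-1)/2}\}$. On the other side, the $\i$-eigenvectors of $\fJ^k$ are exactly the $X_{k,j}$ for $j=0,\ldots,\lfloor k/2\rfloor$, and the content of formula \eqref{eq3b} is precisely that these abstractly defined eigenvectors are the concrete monomials $X_{k,j}=(x+\i y)^j(x-\i y)^{k-j}=z^j\bar z^{k-j}$. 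Thus both eigenspaces are spanned by monomials of the form $z^a\bar z^b$ with $a+b=k$, and the whole claim reduces to checking that the two spanning families of such monomials determine the same half-dimensional subspace of $\V[k]\otimes\C$.

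The comparison is then a matter of counting exponents: each family consists of $(k+1)/2$ monomials $z^a\bar z^b$, which is the correct complex dimension of a half of $T_p^\C M$, and the identification $T_pM\simeq\V[k]$ supplied by $(x,y)$ is what transports one description into the other. The single delicate point — and the step I expect to be the only real obstacle — is the bookkeeping of the $\i$ versus $-\i$ convention: one must match the subspace declared in the definition of $\fJ_z$ against the $+\i$-eigenspace of $\fJ^k$ and not against its conjugate. The relation $\bar X_{k,j}=X_{k,k-j}$ recorded earlier is exactly what governs this, since it shows that passing to conjugates permutes the monomials $z^a\bar z^b$ and so exchanges the two halves; tracking it carefully fixes which eigenspace is which and yields $\fJ_z=\fJ^k$. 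Finally, since the argument is uniform in $(x,y)$ and involves no further choices, it also makes transparent that $\fJ_z$ depends only on the class $[z]\in P(E_p^\C)$, in agreement with the discussion preceding the statement.
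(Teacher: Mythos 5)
Your route is the same as the paper's---the paper's entire proof reads ``a direct consequence of formula \eqref{eq3b}''---and your reduction of the statement to a comparison of $\i$-eigenspaces inside $\V[k]\otimes\C$ is the right frame; note also that the identification of Section \ref{secPre} sends basis vectors of $E_p$ to polynomial variables, so it transports $z^a\bar z^{k-a}\in S^k(E_p^\C)$ to the polynomial $(x+\i y)^a(x-\i y)^{k-a}$ and introduces no conjugation of its own. The genuine gap is that you defer exactly the step you yourself call ``the only real obstacle'' and then assert its outcome; carried out with the paper's stated conventions, it comes out the other way. Write $l=\frac{k+1}{2}$. By definition the $\i$-eigenspace of $\fJ_z$ is $\spn\{z^a\bar z^{k-a}\ |\ a=l,\ldots,k\}$, the monomials of \emph{high} $z$-exponent. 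On the other side, \eqref{defJ} and \eqref{eq2} make $X_{k,j}=V_{k,j}^{even}-\i V_{k,j}^{odd}$, $j=0,\ldots,l-1$, the $\i$-eigenvectors of $\fJ^k$, and \eqref{eq3b} identifies these as $X_{k,j}=z^j\bar z^{k-j}$, the monomials of \emph{low} $z$-exponent $j\le l-1$. The exponent ranges $\{l,\ldots,k\}$ and $\{0,\ldots,l-1\}$ are disjoint, and since the monomials $z^a\bar z^{k-a}$, $a=0,\ldots,k$, form a basis of $\V[k]\otimes\C$, the two spans are complementary, not equal. The relation $\bar X_{k,j}=X_{k,k-j}$ that you invoke is precisely the statement that each span is the conjugate of the other; it refutes, rather than confirms, the asserted equality, giving $\fJ_z=-\fJ^k$. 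The dimension count you offer (both families have $l$ elements) cannot close this, since distinct half-dimensional subspaces abound.

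What your bookkeeping, done honestly, uncovers is a conjugation mismatch inside the paper itself: the explicit construction \eqref{defJ}--\eqref{eq2} makes the low-$z$-power monomials the $+\i$-eigenvectors of $\fJ^k$, while the opening sentence of Section \ref{secLinear} and the definition of $\fJ_z$ in Section \ref{secHol} declare the high-$z$-power monomials to span the $+\i$-eigenspace. A complete argument must therefore either flip one convention (e.g.\ define $\fJ_z$ by the span of $z^j\bar z^{k-j}$, $j\le l-1$, or replace $\fj$ by $-\fj$ in \eqref{defJ}), after which your comparison closes in one line, or else prove the proposition in the form ``$\fJ_z$ coincides with $\fJ^k$ up to conjugation''---by Proposition \ref{prop1} the only formulation insensitive to this choice, and all that the later applications actually need. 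As written, your proof asserts at its crux an equality that fails under the paper's literal conventions, so it does not stand on its own; in fairness, the paper's one-line proof glosses over exactly the same point.
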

\begin{proof}
A direct consequence of formula \eqref{eq3b}.
\end{proof}

\begin{example}
If $k=1$ then $M$ is a two-dimensional manifold and $E=TM$. Then $P(T^\C M)\setminus P(TM)$ is the bundle of complex structures on $TM$. Indeed, any complex direction $[z]\in P(T^\C_p M)$ defines a complex structure in $T_pM$. The complex structures on $T_pM$ are in a one-to-one correspondence with conformal metrics on $T_pM$. Thus $P(T^\C M)\setminus P(TM)\simeq B(E)/CO(2)$, where $B(E)$ is the full frame bundle in this case (c.f. \cite{M2} for applications of this constructions).
\end{example}

Now, the almost-complex structure $\fJ_\phi$ in Theorem \ref{thm1} can be equivalently understood as follows. The connection $\phi$ splits the tangent bundle $T(B(E)/CO(2))$ into the horizontal part $H_\phi$ and the vertical part $V_\phi$. The horizontal part is canonically identified with $\V[k]$ via the soldering form $\omega$, while the vertical part is identified with $\V[2,2]$ (in the notation of Section \ref{secLinear}). On each part the complex structures $\fJ^k$ and $\fJ^2$, respectively, are defined as in Section \ref{secLinear}. Additionally, $\fJ^k$ on $H_\phi$ at point $\chi\in B(E)_p/CO(2)$ can be considered as a lift of $\fJ_z$ from $T_pM$ to $H_\phi$ where $[z]=f_E(\chi)$ is defined in Proposition \ref{propIdentif}.

\begin{theorem}\label{thm3}
Let $TM\simeq S^k(E)$ be a $GL(2)$-structure on even-dimensional manifold $M$  and assume that the almost-complex structure $\fJ_\phi$ defined by a $GL(2)$-connection $\phi$ is integrable. Then any holomorphic  section of the quotient bundle $B(E)/CO(2)$ is a complex structure on $M$.

\end{theorem}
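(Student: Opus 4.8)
The plan is to reduce the integrability of the almost-complex structure that $s$ induces on $M$ to the integrability of $\fJ_\phi$ on $Z:=B(E)/CO(2)$, by pulling back the ideal of $(1,0)$-forms along the section. First I would make precise the almost-complex structure attached to a section. Given a (local) section $s\colon M\to Z$, pick a lift $\tilde s\colon M\to B(E)$ to the $CO(2)$-bundle. By the discussion following Proposition \ref{propComplIdent}, the value $s(p)$ is a complex structure $\fJ_z$ on $T_pM$, and under the coframe $\tilde s^*\omega$ this complex structure is exactly $\fJ^k$; hence the induced almost-complex structure $\fJ_M$ on $M$ has for its bundle of $(1,0)$-forms precisely the span of $\tilde s^*\xi^{k,0},\dots,\tilde s^*\xi^{k,\lfloor k/2\rfloor}$. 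These $\frac{k+1}{2}=\dim_\C M$ forms are pointwise independent, so they generate the $(1,0)$-ideal $\mathcal I_M$ on $M$. I would also record that, although $\xi^{k,j}$ and $\zeta$ live on $B(E)$ and are merely $CO(2)$-equivariant (they rescale under $R_g$, as in the proof of Theorem \ref{thm1}), the ideals they generate descend to $Z$, and the conclusion below is independent of the chosen lift.

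Next I would translate the two hypotheses into statements about ideals. Integrability of $\fJ_\phi$ means that its $(1,0)$-ideal $\mathcal I=\langle\xi^{k,0},\dots,\xi^{k,\lfloor k/2\rfloor},\zeta\rangle$ on $Z$ is a differential ideal, i.e. $d\theta$ has vanishing $(0,2)$-part for every $(1,0)$-form $\theta$; by Theorem \ref{thm2} this is exactly $T^{(0,2)}=C^{(0,2)}=0$, which forces $d\xi^{k,j}$ and $d\zeta$ to lie in $\mathcal I\wedge\Omega^1(Z)$ (the $(2,0)$ and $(1,1)$ pieces both carry a $(1,0)$-factor). The hypothesis that $s$ is a holomorphic section I would read as $ds\circ\fJ_M=\fJ_\phi\circ ds$, so that $s^*$ carries $(1,0)$-forms of $\fJ_\phi$ to $(1,0)$-forms of $\fJ_M$; in particular $s^*\zeta\in\mathcal I_M$. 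Since $s$ is a section, hence an immersion, the pulled-back generators $s^*\xi^{k,j}$ and $s^*\zeta$ all lie in $\mathcal I_M$, which is already generated by the $s^*\xi^{k,j}$, so that $s^*\mathcal I\subseteq\mathcal I_M$.

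Finally I would close the argument. Because pullback commutes with $d$,
\[
d\,\mathcal I_M=d\bigl(s^*\mathcal I\bigr)=s^*\bigl(d\mathcal I\bigr)\subseteq s^*\bigl(\mathcal I\wedge\Omega^1(Z)\bigr)\subseteq(s^*\mathcal I)\wedge\Omega^1(M)\subseteq\mathcal I_M\wedge\Omega^1(M),
\]
so $\mathcal I_M$ is a differential ideal on $M$. By the Newlander--Nirenberg theorem this is precisely the integrability of $\fJ_M$, that is, $\fJ_M$ is a genuine complex structure on $M$, as claimed.

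I expect the main obstacle to lie in the second step rather than in the formal pullback of the third: one must verify that holomorphicity of $s$ indeed forces $s^*\zeta\in\mathcal I_M$, and that the identification of $\mathcal I_M$ with the span of the $\tilde s^*\xi^{k,j}$ is correct and lift-independent. If one prefers an explicit route avoiding abstract ideals, the same point is where the hypothesis bites: expanding $d\xi^{k,j}$ via \eqref{eqtor3} (with $T^{(0,2)}=0$), the diagonal $\alpha,\bar\alpha$-terms and the $\zeta$-terms all carry a factor $\tilde s^*\xi^{k,i}$ except for the wrap-around term $\zeta\wedge\bar\xi^{k,\lfloor k/2\rfloor}$ in $d\xi^{k,\lfloor k/2\rfloor}$ (and its conjugate-type analogue), and this term is annihilated after wedging with $\tilde s^*\xi^{k,0}\wedge\cdots\wedge\tilde s^*\xi^{k,\lfloor k/2\rfloor}$ precisely because $s^*\zeta\in\mathcal I_M$.
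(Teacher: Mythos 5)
Your proof is correct and follows essentially the same route as the paper: both identify the section with the induced almost-complex structure $p\mapsto\fJ_{z_p}$ on $M$, translate holomorphicity into the condition that $\sigma^*\zeta$ lies in the span of the $\sigma^*\xi^{k,j}$ (the paper invokes Lemma 3.1 of \cite{M1} for this, where you derive it from $ds\circ\fJ_M=\fJ_\phi\circ ds$), and then pull back the differential-ideal condition provided by the integrability of $\fJ_\phi$ to conclude integrability on $M$. The only differences are presentational: the paper argues with the explicit wedge identities $d\xi^{i}\wedge\zeta\wedge\xi^{0}\wedge\ldots\wedge\xi^{\lfloor\frac{k}{2}\rfloor}=0$ rather than with the ideal-theoretic language you use.
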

\begin{proof}
Let $\sigma\colon M\to B(E)/CO(2)$ be a section of $B(E)/CO(2)$. For any $p\in M$ denote $[z_p]=f_E(\sigma(p))\in P(E^\C_p)\setminus P(E_p)$. Then $p\mapsto \fJ_{z_p}$ is an almost complex structure on $M$, denoted $\fJ_\phi^\sigma$. Lemma 3.1 in \cite{M1} implies that $\sigma$ is holomorphic if and only if
\begin{equation}\label{eqholom}
\sigma^*(\zeta\wedge\xi^0\wedge\ldots\wedge\xi^{\lfloor\frac{k}{2}\rfloor})=0,
\end{equation}
where $\zeta$ and $\xi^i$, $i=0,\ldots,\lfloor\frac{k}{2}\rfloor$, are $(1,0)$-forms for $\fJ_\phi$. Since $\fJ_\phi$ is integrable $d\xi^i\wedge\zeta\wedge\xi^0\wedge\ldots\wedge\xi^{\lfloor\frac{k}{2}\rfloor}=0$ holds for any $i$. Hence, it follows from \eqref{eqholom} that $d(\sigma^*\xi^i)\wedge\sigma^*\xi^0\wedge\ldots\wedge\sigma^*\xi^{\lfloor\frac{k}{2}\rfloor}=0$. But $\sigma^*\xi^i$, where $i=0,\ldots,\lfloor\frac{k}{2}\rfloor$, are $(1,0)$-forms for $\fJ_\phi^\sigma$. Thus $\fJ_\phi^\sigma$ is integrable.

\end{proof}

Further, we get the following integrability result.
\begin{theorem}\label{thm4}
Let $TM\simeq S^k(E)$ be a $GL(2)$-structure on even-dimensional manifold $M$  and assume that the almost-complex structure $\fJ_\phi$ defined by a $GL(2)$-connection $\phi$ is integrable. Then, the $GL(2)$-structure is half-integrable.
\end{theorem}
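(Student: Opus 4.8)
The plan is to integrate the tautological field of $\alpha_l$-planes using the complex structure supplied by Theorem \ref{thm3}. By Propositions \ref{propIdentif}--\ref{propComplIdent} a point $[z]\in B(E)/CO(2)\cong P(E^\C)\setminus P(E)$ is the complex structure $\fJ_z$ whose $\i$-eigenspace, by \eqref{eq3b}, is $\spn\{z^{k-m}\bar z^{m}:m=0,\dots,l-1\}$; algebraically this is exactly an $\alpha_l$-plane of the complexified structure $T^\C M\simeq S^k(E^\C)$, generated by the two directions $z,\bar z\in E^\C$. Hence $B(E)/CO(2)$ holomorphically parametrizes the ($\C$-linear) $\alpha_l$-planes, and every $\alpha_l$-plane at every point occurs. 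The strategy is: through each $\zeta_0\in B(E)/CO(2)$ produce a holomorphic section $\sigma$ of $B(E)/CO(2)\to M$ with $\sigma(\pi(\zeta_0))=\zeta_0$, invoke Theorem \ref{thm3} to obtain an integrable complex structure $\fJ_\phi^\sigma$ on $M$, and read off an $\alpha_l$-submanifold tangent at $\pi(\zeta_0)$ to the $\alpha_l$-plane labelled by $\zeta_0$.

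First I would establish the existence of such sections. Since $\fJ_\phi$ is integrable, Newlander--Nirenberg makes $B(E)/CO(2)$ a complex manifold; its complex one-dimensional fibres $P(E^\C_p)\setminus P(E_p)$ are $\fJ_\phi$-holomorphic, since the vertical part carries $\fJ^2$. Through $\zeta_0$ take a complex $l$-dimensional submanifold transverse to the fibre; near $\pi(\zeta_0)$ it is the graph of a holomorphic section $\sigma$ with $\sigma(\pi(\zeta_0))=\zeta_0$. Theorem \ref{thm3} then gives the integrable $\fJ_\phi^\sigma$, whose $(0,1)$-forms are the $\sigma^*\xi^{k,j}$ and whose $(0,1)$-distribution is, by Proposition \ref{propComplIdent}, the field of (complex) $\alpha_l$-planes prescribed by $\sigma$. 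Working in the real-analytic category and extending holomorphically to a complexification $M^\C$, this involutive distribution integrates to complex $\alpha_l$-submanifolds, one tangent at $\pi(\zeta_0)$ to the $\alpha_l$-plane of $\zeta_0$; letting $\zeta_0$ vary over $B(E)/CO(2)$ sweeps out all of them.

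The hard part is the reality reduction: the definition of half-integrability in Section \ref{secPre} demands genuine real $\alpha_l$-submanifolds of $M$, whereas the points of $B(E)/CO(2)$ label complex $\alpha_l$-planes and the real ones correspond to the excluded boundary $P(E)\subset P(E^\C)$. I would resolve this using the conjugation on $E^\C$: since $\overline{X_{k,j}}=X_{k,k-j}$, the constructed family of complex $\alpha_l$-submanifolds is invariant under complex conjugation, and its real locus furnishes the real $\alpha_l$-submanifolds, each real $\alpha_l$-plane appearing as a boundary value of the holomorphic family. Equivalently---and this is the computation I expect to be the genuine obstacle---one verifies directly that the involutivity obstruction of the real $\alpha_l$-plane field is precisely the intrinsic-torsion component detected by $T^{(0,2)}$, tracking through \eqref{eqtor2} and the tridiagonal action \eqref{eqtor3} (as in the proofs of Theorem \ref{thm2} and Proposition \ref{prop3}) how the connection terms $\Lambda^0,\Lambda^1$ contribute and isolating the $\bar\xi\wedge\bar\xi$-part; its vanishing is exactly the integrability hypothesis of Theorem \ref{thm2}, and half-integrability follows.
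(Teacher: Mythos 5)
Up to the reality question your argument is the paper's own: you identify points of $B(E)/CO(2)$ with the complex structures $\fJ_z$ (Propositions~\ref{propIdentif} and~\ref{propComplIdent}), you pass through local holomorphic sections and Theorem~\ref{thm3} to obtain integrable complex structures $\fJ_\phi^\sigma$ on $M$, and you conclude that every complex $\alpha_l$-plane lies in an involutive field of complex $\alpha_l$-planes. You even supply a point the paper leaves implicit, namely the local existence of a holomorphic section through any prescribed $\zeta_0$ (Newlander--Nirenberg plus a complex $l$-dimensional submanifold transverse to the fibre). This part is sound.

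The genuine gap is the reduction from complex to real $\alpha_l$-planes, and the mechanism you feature for it does not work. Conjugation maps the plane field (or submanifold) attached to the parameter $[z]$ to the one attached to $[\bar z]$; the fixed locus of conjugation on $P(E^\C)$ is exactly the excluded set $P(E)$, so no member of your family is self-conjugate, and invariance of the family as a whole yields nothing. Worse, the ``real locus'' of a conjugate pair is discrete rather than $l$-dimensional: since $k=2l-1$, the $\alpha_l$-plane at $z^k$, namely $\spn\{z^{k-m}\bar z^m \mid 0\le m\le l-1\}$, and its conjugate, the $\alpha_l$-plane at $\bar z^k$, intersect trivially and together span $T_p^\C M$, so two conjugate complex $\alpha_l$-submanifolds meet transversally in isolated points --- such intersections recover points of $M$ (the usual twistor correspondence), not real $\alpha_l$-submanifolds. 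In addition, your detour through a complexification $M^\C$ presupposes real-analyticity, which is not among the hypotheses; Newlander--Nirenberg makes $B(E)/CO(2)$ a complex manifold but does not complexify $M$. The paper never leaves $M$: it works with the involutive complex distributions $p\mapsto\spn\{z_p^k,z_p^{k-1}\bar z_p,\ldots,z_p^{k-l+1}\bar z_p^{l-1}\}\subset T^\C M$ produced by Theorem~\ref{thm3}, and lets the parameter tend to the boundary $P(E)$; there the complex $\alpha_l$-planes degenerate to complexifications of real $\alpha_l$-planes, involutivity survives the limit since it is a closed condition, and Frobenius applied to the real parts of the limiting distributions yields the required real $\alpha_l$-submanifolds. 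Your phrase about real planes ``appearing as boundary values'' points to exactly this argument, and your fallback --- identifying the real Frobenius obstruction with the torsion component $T^{(0,2)}$ via \eqref{eqtor2} and \eqref{eqtor3} --- could also be made to work, but you carry out neither; as written, the only delicate step of the theorem is missing.
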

\begin{proof}
Recall that $C_p(E)$ denotes the cone of null directions of $S^k(E)$. We introduce a cone of complex null directions in $T^\C_pM\simeq S^k(E^\C)$ by the analogous formula
$$
C_p(E^\C)=\left\{\underbrace{e\odot\cdots\odot e}_k\ |\ e\in E^\C_p \right\}.
$$
Let $(x,y)$ be a basis in $E_p$. Then $(z,\bar z)$ is a basis of $E^\C_p$, where $z=x+\i y$ and $\bar z=x-\i y$. Any point in $C_p(E^\C)$ can be identified with a complex polynomial
$$
V(z,\bar z)=(az+b\bar z)^k
$$
where $a,b\in\C$ are constants. Now, as in the real case, we can define complex $\alpha_i$-planes as tangent planes to osculating cones of $C_p(E^\C)$ consisting of complex polynomials with multiple roots. In particular, the $\alpha_i$-plane at $z^k\in C_p(E^\C)$ is spanned by polynomials
$$
z^k, z^{k-1}\bar z,\ldots, z^{k-l+1}\bar z^{l-1}.
$$
Let $l=\frac{k+1}{2}$. We get the following
\begin{lemma}
The space of $(0,1)$-vectors of the complex structure $\fJ_z$ on $T_pM$ is the complex $\alpha_l$-plane at $z^k\in C_p(E^\C)$.
\end{lemma}
Let $\chi\in B(E)/CO(2)$ be fixed and let $\sigma$ be a holomorphic section of $B(E)/CO(2)$ passing through $\chi$. Then, by Theorem \ref{thm3},  $p\mapsto\fJ_{f_E(\sigma(p))}$ is a complex structure on $M$ and consequently the complex distribution
$$
p\mapsto\spn\{z_p^k, z_p^{k-1}\bar z_p,\ldots, z_p^{k-l+1}\bar z_p^{l-1}\}
$$
is involutive, where $z_p=f_E(\sigma(p))$. Thus, any complex $\alpha_l$-plane extends to an involutive distribution of complex $\alpha_l$-planes. By continuity, any real $\alpha_l$-plane extends to an integrable distribution that is tangent to an $\alpha_l$-submanifold of $M$.
\end{proof}

In dimension 4 the result can be strengthen.
\begin{theorem}
A 4-dimensional $GL(2)$-structure is half-integrable if and only if the corresponding canonical almost-complex structure is integrable.
\end{theorem}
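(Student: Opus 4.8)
The plan is to prove the two implications separately, leveraging the results already established in the paper. The statement asserts an equivalence in dimension~$4$ (so $k=3$, $l=2$), and the canonical almost-complex structure $\fJ_\phi$ is the one built from the canonical connection of Theorem~\ref{thmnorm}, which in this dimension coincides with the Bryant connection (as noted in the Examples of Section~\ref{secCon}). One direction is essentially already in hand: Theorem~\ref{thm4} shows that integrability of $\fJ_\phi$ implies half-integrability of the $GL(2)$-structure. So the forward implication (``integrable $\Rightarrow$ half-integrable'') requires no new work beyond specializing Theorem~\ref{thm4} to $k=3$. The real content lies in the converse.

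For the converse, ``half-integrable $\Rightarrow$ $\fJ_\phi$ integrable'', the strategy is to use the integrability criterion of Theorem~\ref{thm2}, which reduces the problem to showing that the torsion coefficient $T^{(0,2)}$ and the curvature coefficient $C^{(0,2)}$ both vanish. By Proposition~\ref{prop3}, if we can establish that $T^{(1,1)}=0$ and $T^{(0,2)}=0$ then $C^{(0,2)}=0$ follows automatically, so it suffices to control the torsion alone. Thus the plan is: first translate the half-integrability hypothesis into a statement about the torsion $\Theta$ of the canonical connection, and then check that this forces the relevant $(0,2)$ (and $(1,1)$) components to vanish. In dimension~$4$, half-integrability means the $GL(2)$-structure admits a foliation by $\alpha_2$-submanifolds; I would express the tangency of these submanifolds to the $\alpha_2$-planes as a Frobenius-type integrability condition on the corresponding distribution, and then read off which components of the canonical torsion are obstructed.

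The key computational step is to decompose the Bryant torsion, whose explicit form $\Theta=\langle\tau,\langle\omega,\omega\rangle_1\rangle_4$ is recalled in Section~\ref{secCon}, into its $(2,0)$, $(1,1)$ and $(0,2)$ parts relative to the frame $(\xi,\bar\xi)$ adapted to $\fJ^3$, and to match these against the integrability of the $\alpha_2$-distribution. Concretely, I expect the half-integrability condition to annihilate precisely the torsion components that do not involve the ``holomorphic'' forms $\xi^{3,0},\xi^{3,1}$, i.e.\ to kill the $\bar\xi\wedge\bar\xi$ and $\xi\wedge\bar\xi$ pieces, which are exactly $T^{(0,2)}$ and $T^{(1,1)}$. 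Here the computation is genuinely dimension-specific: the decomposition $\V[3]=\V[3,3]\oplus\V[3,1]$ has only two irreducible $CO(2)$-pieces, and the single free function $\tau$ (a degree-$7$ polynomial) encoding the torsion can be tracked component by component.

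The main obstacle I anticipate is the bookkeeping in this last step: one must carefully convert the half-integrability of the real $\alpha_2$-foliation into a condition on the \emph{complex} $\alpha_l$-distribution (via the Lemma inside the proof of Theorem~\ref{thm4} identifying the $(0,1)$-space of $\fJ_z$ with the complex $\alpha_l$-plane), and then verify that the Frobenius integrability of that complex distribution is equivalent to the vanishing of exactly $T^{(1,1)}$ and $T^{(0,2)}$ for the canonical connection. The delicate point is that a priori half-integrability gives integrability of the real distribution of $\alpha_2$-planes, whereas Theorem~\ref{thm2} demands a statement about complex $(0,2)$-torsion; bridging these requires showing that the normalization of Theorem~\ref{thmnorm} has already absorbed all torsion ambiguity into the components that survive, so that the geometric integrability condition translates cleanly into the algebraic one. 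Once this dictionary is in place, the equivalence follows by combining Theorem~\ref{thm2}, Proposition~\ref{prop3}, and Theorem~\ref{thm4}.
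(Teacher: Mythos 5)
Your overall skeleton agrees with the paper's: the direction ``$\fJ_\phi$ integrable $\Rightarrow$ half-integrable'' is indeed just Theorem \ref{thm4} specialized to $k=3$, and the converse is meant to follow by establishing $T^{(1,1)}=T^{(0,2)}=0$, feeding this into Proposition \ref{prop3} to get $C^{(0,2)}=0$, and then invoking Theorem \ref{thm2}. The genuine gap is in how you obtain the vanishing of the torsion components. You propose to derive it from scratch, by translating the Frobenius integrability of the $\alpha_2$-distribution into conditions on the explicit Bryant torsion $\langle\tau,\langle\omega,\omega\rangle_1\rangle_4$; but this computation --- which you yourself flag as the ``main obstacle'' and the ``delicate point'' --- is never carried out, and it is precisely the entire content of the hard direction. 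The missing ingredient is that this step is a known theorem: Bryant proved in \cite{B} that a $4$-dimensional $GL(2)$-structure is integrable if and only if its canonical connection is torsion-free (and in dimension $4$, $2$-integrability, $3$-integrability and half-integrability all coincide by Proposition \ref{propInt}). The paper's proof of the converse is exactly this citation: half-integrable $\Rightarrow$ $\Theta=0$ $\Rightarrow$ in particular $T^{(1,1)}=T^{(0,2)}=0$ $\Rightarrow$ $C^{(0,2)}=0$ by Proposition \ref{prop3} $\Rightarrow$ $\fJ_\phi$ integrable by Theorem \ref{thm2}. You quote the Examples of Section \ref{secCon} identifying the canonical connection of Theorem \ref{thmnorm} with Bryant's, but you never use the accompanying equivalence; without it, your converse remains an unproved sketch, and completing it honestly would amount to re-deriving Bryant's theorem.

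A secondary remark: your expectation that half-integrability kills ``precisely'' $T^{(1,1)}$ and $T^{(0,2)}$ is in fact consistent with the paper, because the Examples of Section \ref{secCon} note that one of the normalization conditions forces $T^{(2,0)}=0$ identically for the canonical connection in dimension $4$, so torsion-freeness reduces to the vanishing of exactly those two components. But this observation does not substitute for the proof of the dictionary itself; the equivalence between the geometric condition (existence of the $\alpha_2$-foliation) and the algebraic one ($\Theta=0$) is Bryant's result, and citing it is the efficient and intended route.
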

\begin{proof}
By \cite{B} a 4-dimensional structure is integrable if and only if it is torsion-free. By Proposition \ref{prop3} and Theorem \ref{thm2}, if a $GL(2)$-structure is torsion-free then $C^{(0,2)}=0$ and consequently the corresponding almost-complex structure is integrable.
\end{proof}

\end{document}